\newtheorem{theorem}{Theorem}[subsection]
\newtheorem{theorem-section}{Theorem}[section]
\newtheorem{lemma}[theorem]{Lemma}
\newtheorem{proposition}[theorem]{Proposition}
\newtheorem{proposition-section}[theorem-section]{Proposition}
\newtheorem{corollary-section}[theorem-section]{Corollary}
\theoremstyle{definition}
\newtheorem{definition}[theorem]{Definition}
\newtheorem{remark}[theorem]{Remark}
\newtheorem{remark-section}[theorem-section]{Remark}
\newcommand{\op}[1]{\operatorname{#1}}
\newcommand{\leftexp}[2]{{\vphantom{#2}}^{#1}{#2}}
\newcommand{\dcoh}[1]{\operatorname{D}(\operatorname{coh }#1)}
\newcommand{\dbcoh}[1]{\operatorname{D}^{\operatorname{b}}(\operatorname{coh }#1)}
\newcommand{\dperf}{\operatorname{D}^{\operatorname{pe}}}
\def\Z{\op{\mathbb{Z}}}
\def\O{\op{\mathcal{O}}}
\def\P{\op{\mathbb{P}}}
\def\ra{\rightarrow}
\def\tif{\text{if } }
\def\tand{\text{ and } }
\def\GG{\op{\mathbb{G}}}
\def\cX{\mathcal{X}}
\def\cL{\mathcal{L}}
\def\cE{\mathcal{E}}
\def\cA{\mathcal{A}}
\def\cB{\mathcal{B}}
\def\cO{\mathcal{O}}
\def\cM{\mathcal{M}}
\def\cV{\mathcal{V}}
\def\cW{\mathcal{W}}
\def\cC{\mathcal{C}}
\def\cU{\mathcal{U}}
\def\cP{\mathcal{P}}
\def\cT{\mathcal{T}}
\def\cZ{\mathcal{Z}}
\def\cD{\mathcal{D}}
\def\gm{\mathbb{G}_m}
\def\wtQ{\widetilde{Q}_{\mathcal V}}
\def\iso{\cong}
\def\cbullet{\raisebox{0.02cm}{\scalebox{0.7}{$\bullet$}}}
\newcommand{\newterm}{\textsf}
\title[HPD via VGIT]{Homological Projective Duality via Variation of Geometric Invariant Theory Quotients}
\author[Ballard]{Matthew Ballard}
\address{
  \begin{tabular}{l}
   Matthew Ballard  \\ 
   % \hspace{.1in} University of Wisconsin-Madison, Department of Mathematics, Madison, WI, USA \\
   \hspace{.1in} Universit\"at Wien, Fakult\"at f\"ur Mathematik,  Wien, \"Osterreich \\
   \hspace{.1in} Email: {\bf ballard@math.wisc.edu} \\
  \end{tabular}
}
\author[Deliu]{Dragos Deliu}
\address{
  \begin{tabular}{l}
   Dragos Deliu  \\ 
   \hspace{.1in} Universit\"at Wien, Fakult\"at f\"ur Mathematik,  Wien, \"Osterreich \\
   \hspace{.1in} Email: {\bf dragos.deliu@univie.ac.at} \\
  \end{tabular}
}
\author[Favero]{David Favero}
\address{
  \begin{tabular}{l}
   David Favero \\
   \hspace{.1in} Universit\"at Wien, Fakult\"at f\"ur Mathematik,  Wien, \"Osterreich \\
   \hspace{.1in} Email: {\bf favero@gmail.com} \\
  \end{tabular}
}
\author[Isik]{M. Umut Isik}
\address{
  \begin{tabular}{l}
   M. Umut Isik \\ 
   \hspace{.1in} Universit\"at Wien, Fakult\"at f\"ur Mathematik,  Wien, \"Osterreich \\
   \hspace{.1in} Email: {\bf mehmet.umut.isik@univie.ac.at} \\
  \end{tabular}
}
\author[Katzarkov]{Ludmil Katzarkov}
\address{
  \begin{tabular}{l}
   Ludmil Katzarkov \\
   \hspace{.1in} Universit\"at Wien, Fakult\"at f\"ur Mathematik,  Wien, \"Osterreich \\
   \hspace{.1in} Email: {\bf lkatzark@math.uci.edu} \\
  \end{tabular}
}
\numberwithin{equation}{section}
\begin{document}
\begin{abstract}
We provide a geometric approach to constructing Lefschetz collections and Landau-Ginzburg Homological Projective Duals from a variation of Geometric Invariant Theory quotients. This approach yields homological projective duals for Veronese embeddings in the setting of Landau Ginzburg models. Our results also extend to a relative Homological Projective Duality framework.  
\end{abstract}
\maketitle

\section{Introduction} \label{sec: intro}

A fundamental question in algebraic geometry is how invariants behave under passage to hyperplane sections.  In his seminal work \cite{KuzHPD}, Kuznetsov studied this question extensively for the bounded derived category of coherent sheaves on a projective variety and developed a deep homological manifestation of projective duality.   He suitably titled this phenomenon, ``Homological Projective Duality'' (HPD).  

The HPD setup is as follows.  One starts with a smooth variety $X \rightarrow \P(V)$, together with some homological data which is called a Lefschetz decomposition, and constructs a Homological Projective Dual; $Y\rightarrow \P(V^*)$  together with a dual Lefschetz decomposition. 
This establishes a precise relationship between the derived categories of any dual complete linear sections $X \times_{\P(V)} \P(L^\perp)$ and $Y \times_{\P(V^*)} \P(L)$; we call this result of Kuznetsov the ``Fundamental Theorem of Homological Projective Duality.''  \cite[Theorem 6.3]{KuzHPD} (Theorem \ref{thm: fundamental HPD} below). 

In this paper, we develop a robust geometric approach to constructing Homological Projective Duals as Landau-Ginzburg models.  The idea, in the terminology of high-energy theoretical physics, is to pass to a gauged linear sigma model and ``change phases" \cite{HHP, HTo, DS, CDHPS, Sha}.  In mathematical terms, this is first passing from a hypersurface to the total space of a line bundle \cite{Isik, Shipman}, then varying Geometric Invariant Theory quotients (VGIT) to do a birational transformation to the total space of this line bundle \cite{BFK12, HL12, KawFF,VdB, Seg2, HW, DSe}.  A nice consequence of our technique is that we can expand the Homological Projective Duality framework to the relative setting i.e.\ all our results are proven in the relative setting over a general smooth base variety.

Specifically, using the semi-orthogonal decompositions from \cite{BFK12}, we construct both Lefschetz collections and Homological Projective duals for a large class of quotient varieties. Our main application is to Veronese embeddings $\P(W) \rightarrow \P(S^dW)$ for $d\leq \op{dim}W$. After recovering the natural Lefschetz decomposition in this case, we prove that the Landau-Ginzburg pair $\left([ W\times \P(S^dW^*) / \mathbb{G}_m],w\right) $, where the $\mathbb{G}_m$-action is by dilation on $W$ and $w$ is the universal degree $d$ polynomial, is a homological projective dual to the Veronese embedding. 
 In a subsequent paper, \cite{BDFIKdegreed}, we replace the pair $\left([ W\times \P(S^dW^*) / \mathbb{G}_m],w\right)$ with the non-commutative space, $(\P(S^d W^*),\cA)$, where $\cA$ is a $\Z$-graded sheaf of minimal $A_\infty$-algebras given by
 $$\mathcal A=\left(\bigoplus_{k\in \Z} u^k\cO_{\P(S^d W^*)}(k) \right)\otimes\Lambda^{\bullet} W^*,$$
 and higher products defined by explicit tree formulas, notably with 
 $$\mu^d(1\otimes v_{i_1},\ldots,1\otimes v_{i_d})=\frac{u}{d!}\frac{\partial^d w}{\partial x_{i_1}\ldots\partial x_{i_d}},$$
 where $\left\{ x_j \right\}$ denotes a basis of $W$ and $\left\{ v_j \right\}$ the corresponding basis of $\Lambda^{1} W^*$,
and $\mu^i = 0$ for $2<i<d$. When $d=2$, this recovers the homological projective dual from \cite{Kuz05}. 

% When $d>2$, assuming a technical result (Conjecture \ref{conditionstar}), this Landau-Ginzburg pair is derived-equivalent to the pair $(\P(S^d W^*),\cA)$, where $\cA$ is a $\Z$-graded sheaf of $A_\infty$-algebras defined explicitly as
% $$\mathcal A=\op{Sym}(u\cO_{\P(S^d W^*)}(1),u^{-1}\cO_{\P(S^d W^*)}(-1))\otimes\Lambda^{\bullet} W^*,$$
%where
% $$\mu^d(1\otimes v_{i_1},\ldots,1\otimes v_{i_d})=\frac{u}{d!}\frac{\partial^d w}{\partial x_{i_1}\ldots\partial x_{i_d}}$$
%and $\mu^i = 0$ for $2<i<d$. When $d=2$, we recover the non-commutative variety from \cite{Kuz05}. 

It should be noted that neither Kuznetsov's precise definition of a homological projective dual nor his Fundamental Theorem are available at this level of generality. We instead construct Landau-Ginzburg models which are \emph{weak} homological projective duals and prove that the conclusions of the Fundamental Theorem hold directly in our setting (Theorem \ref{thm: MainHPDMF}).

%Additionally, we provide a new description of the derived category of a degree $d$ hypersurface fibration, $p: X \to S$, extending the case $d=2$ in \cite{Kuz05}. Using the same method of starting with a variety, passing to a derived-equivalent gauged LG model using the results of \cite{Isik, Shipman}, and performing VGIT, we prove a relative version of a theorem of Orlov \cite{Orl09}. This compares the derived category of $p: X\to S$ to the derived category of a related gauged Landau-Ginzburg model. Then, following calculations in \cite{Se, Dyc11, Ef}, we produce a local generator for the derived category of this gauged Landau-Ginzburg model and determine its derived endomorphism sheaf of dg-algebras $\cB$ over the base $S$. A version of the homological perturbation lemma applies to show that, assuming the technical result (Conjecture \ref{conditionstar}) mentioned above, $\cB$ can be replaced by a quasi-isomorphic sheaf $\cA$ of minimal $A_\infty$-algebras. We also give formulas for the first $d$ higher multiplications of $\cA$.

Homological Projective Duality was exhibited by Kuznetsov for the double Veronese embedding
\[
 \P(W)\hookrightarrow \P(S^2W).
\]
In this case, Kuznetsov \cite{Kuz05} proves that a homological projective dual is given by $Y=(\P(S^2W^*),\op{Cliff}_0)$, where $\op{Cliff}_0$ is a sheaf of even Clifford algebras. 
As a consequence, Kuznetsov recovers a theorem of Bondal and Orlov \cite{BO95} relating the derived category of intersection of two even dimensional quadrics to the derived category of a hyperelliptic curve. Moreover, his Homological Projective Duality framework provides analogous descriptions for arbitrary intersections of quadrics  as in \cite{BOICM}.

In \cite{Kuz06}, Kuznetsov constructs the dual to the Grassmannian of two dimensional planes in a vector space $W$ of dimension $6$ or $7$ with respect to the Pl\"ucker embedding,
\[
 \op{Gr}(2,W) \hookrightarrow \P(\Lambda^2W).
\]
In these cases, the homological projective dual is a non-commutative resolution of the classical projective dual: the Pfaffian variety. Among the many applications is a derived equivalence between two non-birational Calabi-Yau varieties of dimension $3$, originally studied by R\o{}dland as an example in mirror symmetry \cite{Rod}. This derived equivalence  was proven independently by Borisov and C\u{a}ld\u{a}raru \cite{BC} who demonstrated that generic Grassmannian Calabi-Yau varieties can be realized as moduli spaces of curves on the dual Pfaffian Calabi-Yau. Homological Projective Duality for the Grassmannian $\op{Gr}(3,6)$ was studied in \cite{deliu}.

A relative version of the $2$-Veronese example was considered in \cite{ABB}; it was used to relate rationality questions to categorical representability. Another example of Homological Projective Duality is conjectured by Hosono and Takagi and supported by a proof of a derived equivalence between the corresponding linear sections \cite{HTa1, HTa2, HTa3}.

The main cases that this paper does not interpret in our larger framework are the Grassmannian and Hosono-Takagi examples \cite{Kuz06, deliu, HTa1,HTa2, HTa3}.  However, these examples do admit similar physical interpretations  \cite{DS, Hori}.  Thus, it is plausible that all known examples of HPD would fall within the scope of our methodological approach.  The main issue is that the results of \cite{BFK12} need to be expanded to handle the complexity of the VGIT theory which arises.  Indeed, work of Addington, Donovan, and Segal, \cite{ADS} uses more complex GIT stratifications to understand the Grassmannian case, albeit in a slightly less general context than HPD.

\subsection*{Acknowledgments} We thank David Ben-Zvi, Colin Diemer, Alexander Efimov, Kentaro Hori, Maxim Kontsevich, Alexander Kuznetsov, Yanki Lekili, Dmitri Orlov, Pranav Pandit, Tony Pantev, Alexander Polishchuk and Anatoly Preygel for useful discussions. The authors were funded by NSF DMS 0854977 FRG, NSF DMS 0600800, NSF DMS 0652633 FRG, NSF DMS 0854977, NSF DMS 0901330, FWF P 24572 N25, by FWF P20778 and by an ERC Grant. The first author was funded, in addition, by NSF DMS 0838210 RTG.

\section{Background}\label{sec: background}

\subsection{Derived categories of LG models} \label{sec: LG}

Let $Q$ be a smooth and quasi-projective variety with the action of an affine algebraic group, $G$. Let $\cL$ be an invertible $G$-equivariant sheaf on $Q$ and let $w \in \op{H}^0(Q,\mathcal L)^G$ be a $G$-invariant section of $\cL$. We start by recalling the appropriate analog of the bounded derived category of coherent sheaves for a quadruple, $(Q, G, \mathcal L,w)$. Matrix factorization categories have been studied in \cite{EisMF, Buc86, Orl04}. Building on these works, most of the ideas presented here are due to L. Positselski \cite{Pos1,Pos2}. The authors generalize these ideas in \cite{BDFIK} to a setting which includes the material presented below.

\begin{definition}\label{deflgmodel}
 A \newterm{gauged Landau-Ginzburg model}, or \newterm{gauged LG model}, is the quadruple, $(Q, G, \cL, w)$, with $Q$, $G$, $\cL$, and $w$ as above. We shall commonly denote a gauged LG model by the pair $([Q/G],w)$. 
\end{definition}

To declutter the notation, given a quasi-coherent $G$-equivariant sheaf, $\mathcal E$, we denote $\mathcal E \otimes \cL^n$ by $\mathcal E(n)$. Given a morphism, $f: \mathcal E \to \mathcal F$, we denote $f \otimes \op{Id}_{\cL^n}$ by $f(n)$. Following Eisenbud, \cite{EisMF}, one gives the following definition.

\begin{definition}
 A \newterm{coherent factorization}, or simply a \newterm{factorization}, of a gauged LG model, $([Q/G],w)$, consists of a pair of coherent $G$-equivariant sheaves, $\mathcal E^{-1}$ and $\mathcal E^0$, and a pair of $G$-equivariant $\mathcal O_Q$-module homomorphisms,
\begin{align*}
 \phi^{-1}_{\mathcal E} &: \mathcal E^{0}(-1) \to \mathcal E^{-1} \\
 \phi^0_{\mathcal E} &: \mathcal E^{-1} \to \mathcal E^0
\end{align*}
such that the compositions, $\phi^0_{\mathcal E} \circ \phi^{-1}_{\mathcal E} : \mathcal E_{0}(-1) \to \mathcal E^0$ and  $\phi^{-1}_{\mathcal E}(1) \circ \phi_{\mathcal E}^0: \mathcal E^{-1} \to \mathcal E^{-1}(1)$, are multiplication by $w$.  We shall often simply denote the factorization $(\mathcal E^{-1}, \mathcal E^0, \phi_{\mathcal E}^{-1}, \phi_{\mathcal E}^0)$ by $\mathcal E$. The coherent $G$-equivariant sheaves, $\mathcal E^0$ and $\mathcal E^{-1}$, are called the \newterm{components of the factorization}, $\mathcal E$.

 A \newterm{morphism of factorizations}, $g: \mathcal E \to \mathcal F$, is a pair of morphisms of coherent $G$-equivariant sheaves,
 \begin{align*}
  g^{-1} & : \mathcal E^{-1} \to \mathcal F^{-1} \\
  g^0 & : \mathcal E^0 \to \mathcal F^0,
 \end{align*}
 making the diagram,
 \begin{center}
 \begin{tikzpicture}[description/.style={fill=white,inner sep=2pt}]
  \matrix (m) [matrix of math nodes, row sep=3em, column sep=3em, text height=1.5ex, text depth=0.25ex]
  {  \mathcal E^{0}(-1) & \mathcal E^{-1} & \mathcal E^{0} \\
   \mathcal F^{0}(-1) & \mathcal F^{-1} & \mathcal F^{0} \\ };
  \path[->,font=\scriptsize]
  (m-1-1) edge node[above]{$\phi^{-1}_{\mathcal E}$} (m-1-2) 
  (m-1-1) edge node[left]{$g^{0}(-1)$} (m-2-1)
  (m-2-1) edge node[above]{$\phi^{-1}_{\mathcal F}$} (m-2-2)
  (m-1-2) edge node[above]{$\phi^{0}_{\mathcal E}$} (m-1-3) 
  (m-1-2) edge node[left]{$g^{-1}$} (m-2-2)
  (m-2-2) edge node[above]{$\phi^{0}_{\mathcal F}$} (m-2-3) 
  (m-1-3) edge node[left]{$g^{0}$} (m-2-3)
  ;
 \end{tikzpicture} 
 \end{center}
 commute.
 
 We let $\op{coh}([Q/G],w)$ be the Abelian category of factorizations with coherent components.
 
 There is an obvious notion of a chain homotopy between morphisms in $\op{coh}([Q/G],w)$. Let $g_1,g_2 : \mathcal E \to \mathcal F$ be two morphisms of factorizations. A \newterm{homotopy} between $g_1$ and $g_2$ is a pair of morphisms of quasi-coherent $G$-equivariant sheaves,
 \begin{align*}
  h^{-1} & : \mathcal E^{-1} \to \mathcal F^{0}(-1) \\
  h^0 & : \mathcal E^0 \to \mathcal F^{-1},
 \end{align*}
 such that
 \begin{align*}
  g_1^{-1} - g_2^{-1} & = h^0 \circ \phi^0_{\mathcal E} + \phi^{-1}_{\mathcal F} \circ h^{-1} \\
  g_1^0 - g_2^0 & = h^{-1}(1) \circ \phi^{-1}_{\mathcal E}(1) + \phi^0_{\mathcal F} \circ h^0.
 \end{align*}
 
 We let $K(\op{coh}[Q/G],w)$ be the corresponding homotopy category, the category whose objects are factorizations and whose morphisms are homotopy classes of morphisms. 
\end{definition}

There is a translation autoequivalence, $[1]$, defined as 
\begin{displaymath}
 \mathcal E[1] := (\mathcal E^0, \mathcal E^{-1}(1), -\phi^0_{\mathcal E}, - \phi^{-1}_{\mathcal E}(1)).
\end{displaymath}
 
For any morphism, $g: \mathcal E \to \mathcal F$, there is a natural cone construction. We write, $C(g)$, for the resulting factorization. It is defined as 
\begin{displaymath}
C(g):= \left( \mathcal E^{0} \oplus \mathcal F^{-1}, \mathcal E^{-1}(1) \oplus \mathcal F^0, \begin{pmatrix} -\phi_{\mathcal E}^0 & 0 \\ g^{-1} & \phi_{\mathcal F}^{-1} \end{pmatrix}, \begin{pmatrix} -\phi_{\mathcal E}^{-1}(1) & 0 \\ g^0 & \phi_{\mathcal F}^0 \end{pmatrix} \right).
\end{displaymath}
The translation and the cone construction induce the structure of a triangulated category on the homotopy category, $K(\op{Qcoh}[Q/G],w)$ \cite{Pos2, BDFIK}.

We wish to derive $\op{coh}([Q/G],w)$, however, we lack a notion of quasi-isomorphism because our ``complexes'' lack cohomology. For the usual derived categories of sheaves, one can view localization by the class of quasi-isomorphisms as the Verdier quotient by acyclic objects. The correct substitute in $\op{coh}([Q/G],w)$ for acyclic complexes was defined independently in \cite{Pos1}, \cite{OrlMF}.

The following definitions give the correct analog for the derived category of sheaves for LG models, when $Q$ is smooth. These definitions are due to Positselski, see \cite{Pos1,Pos2}.

\begin{definition}
 A factorization, $\mathcal A$, is called \newterm{totally acyclic} if it lies in the smallest thick subcategory of $K(\op{coh}[Q/G],w)$ containing all totalizations \cite[Definition 2.10]{BDFIK} of short exact sequences 
from $\op{coh}([Q/G],w)$. We let $\op{acycl}([Q/G],w)$ denote the thick subcategory of $K(\op{coh}[Q/G],w)$ consisting of totally acyclic factorizations.  
 
 The \newterm{absolute derived category of factorizations}, or the \newterm{derived category}, of the LG model $([Q/G],w)$, is the Verdier quotient,
\begin{displaymath}
 \op{D}(\op{coh}[Q/G],w) := K(\op{coh}[Q/G],w)/\op{acycl}([Q/G],w).
\end{displaymath}
 
 Abusing terminology, we say that $\mathcal E$ and $\mathcal F$ are \newterm{quasi-isomorphic} factorizations if they are isomorphic in the absolute derived category. 
\end{definition}

Later in the paper we will also use the singularity category as an intermediary. We recall the definition.

\begin{definition}
 Let $[Y/G]$ be a global quotient stack with $Y$ quasi-projective. The \newterm{category of singularities} of $[Y/G]$ is the Verdier quotient,
 \begin{displaymath}
  \op{D}_{\op{sg}}([Y/G]) := \dbcoh{[Y/G]}/\op{perf}([Y/G]),
 \end{displaymath}
 of the bounded derived category of coherent sheaves by the thick subcategory of perfect complexes. 
\end{definition}

The following result, based on Koszul Duality, is referred to in the physics literature as the $\sigma$-model/Landau-Ginzburg-model correspondence for B-branes, arising from renormalization group flow. We sometimes refer to it briefly as the ``$\sigma$-LG correspondence''.

\begin{theorem}  \label{thm: isik}
 Let $Y$ be the zero-scheme of a section $s\in \Gamma(X,\cE)$ of a locally-free sheaf of finite rank $\cE$ on a smooth variety $X$. Assume that $s$ is a regular section, i.e. $\op{dim}Y = \op{dim}X - \op{rank} \cE$. Then, there is an equivalence of triangulated categories
 \begin{equation*}
  \dbcoh{Y} \iso \dcoh{[\op{V}(\cE)/\mathbb{G}_m], w}
 \end{equation*}
 where $\op{V}(\cE)$ is the total space $\mathbf{Spec}\operatorname{Sym}(\cE)$, $w$ is the regular function determined by $s$ under the natural isomorphism
 \begin{displaymath}
  \Gamma(\op{V}(\cE),\mathcal O) \cong \Gamma(X, \op{Sym} \mathcal E),
 \end{displaymath}
 and $\mathbb{G}_m$ acts by dilation on the fibers.
\end{theorem}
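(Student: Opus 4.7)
The plan is to factor the claimed equivalence through the $\mathbb{G}_m$-equivariant singularity category of the zero scheme of $w$. Introduce the $\mathbb{G}_m$-invariant Cartier divisor $Z_0 := \{w = 0\} \subset \operatorname{V}(\mathcal{E})$, which is nonempty and proper of codimension one since $s$ is a regular (hence in particular nonzero) section. By a $\mathbb{G}_m$-equivariant version of Orlov's theorem relating graded matrix factorizations to graded singularity categories — available in the authors' companion paper \cite{BDFIK} and in Positselski's work \cite{Pos1, Pos2} — one obtains an equivalence
$$\dcoh{[\operatorname{V}(\mathcal{E})/\mathbb{G}_m], w} \;\cong\; \operatorname{D}_{\operatorname{sg}}([Z_0/\mathbb{G}_m]).$$
This reduces the theorem to identifying the right-hand side with $\dbcoh{Y}$.

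For that identification, I would begin with a local analysis of the geometry of $Z_0$. In a local trivialization $\mathcal{E} \cong \mathcal{O}_X^r$ with $s = (s_1, \ldots, s_r)$, the function $w$ takes the shape $\sum s_i y_i$ in the linear fiber coordinates $y_i$. The Jacobian criterion then pins down the singular locus of $Z_0$ as the common vanishing locus of the $s_i$, that is, precisely the total space $\operatorname{V}(\mathcal{E}|_Y)$. The regularity hypothesis on $s$ guarantees that $Y$ has the expected codimension $\operatorname{rank}\mathcal{E}$, so $\operatorname{V}(\mathcal{E}|_Y)$ is a genuine vector bundle over $Y$. Since $Z_0$ is smooth away from $\operatorname{V}(\mathcal{E}|_Y)$, the singularity category $\operatorname{D}_{\operatorname{sg}}([Z_0/\mathbb{G}_m])$ is supported, in a sheaf-theoretic sense, on this vector bundle.

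Next I would construct a candidate functor
$$\Phi : \dbcoh{Y} \longrightarrow \operatorname{D}_{\operatorname{sg}}([Z_0/\mathbb{G}_m])$$
as the composite of pullback along the bundle projection $\operatorname{V}(\mathcal{E}|_Y) \to Y$, pushforward along the closed inclusion $\operatorname{V}(\mathcal{E}|_Y) \hookrightarrow Z_0$, and the canonical quotient to the singularity category. Full faithfulness I would check by computing Hom-spaces in $\operatorname{D}_{\operatorname{sg}}([Z_0/\mathbb{G}_m])$ via an explicit Koszul-type resolution of the inclusion $\operatorname{V}(\mathcal{E}|_Y) \hookrightarrow Z_0$; the relevant Ext-groups on the $Z_0$ side split as a sum of $\mathbb{G}_m$-weight pieces, and I would show that the weight-zero summand reproduces $\operatorname{Ext}^{\bullet}_Y$ while all other weights are killed in the singularity category. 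Essential surjectivity follows by devissage: objects supported set-theoretically on $\operatorname{V}(\mathcal{E}|_Y)$ generate $\operatorname{D}_{\operatorname{sg}}([Z_0/\mathbb{G}_m])$, and such objects are in turn generated by pushforwards from $\operatorname{V}(\mathcal{E}|_Y)$, each of which is visibly in the image of $\Phi$.

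The main obstacle is the weight-bookkeeping in these Hom-computations. The ungraded singularity category $\operatorname{D}_{\operatorname{sg}}(Z_0)$ has a $2$-periodic flavor — it is equivalent to the ungraded matrix factorization category — and it is precisely the internal $\mathbb{G}_m$-grading (coming from the weight-one dilation action on fibers) that unwinds this periodicity into the cohomological grading of $\dbcoh{Y}$. Verifying the vanishing of Hom's in nonzero weights reduces, on trivializing patches, to a version of Koszul duality between $\operatorname{Sym}\mathcal{E}$ and $\Lambda^\bullet \mathcal{E}^{\vee}$, and making this rigorous globally — with correct handling of the $\mathbb{G}_m$-equivariance, the Cartier structure of $Z_0$, and coherent (as opposed to merely quasi-coherent) representatives — is the technical heart of the argument. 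Everything else is essentially a formal consequence of the $\sigma$-LG correspondence encoded by Orlov's theorem.
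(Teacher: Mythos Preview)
The paper does not prove this statement at all: its proof reads, in full, ``This is \cite[Theorem 3.6]{Isik} or \cite[Theorem 3.4]{Shipman}.'' So the result is imported as a black box from the literature, and there is nothing to compare your argument against within the paper itself. Your sketch is broadly in the spirit of Isik's original proof --- factoring through the equivariant singularity category of $\{w=0\}$ and invoking a Koszul-duality mechanism to match $\dbcoh{Y}$ with the weight-zero part --- so in that sense you are reconstructing the cited argument rather than offering an alternative.

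One genuine inaccuracy worth flagging: your Jacobian computation is incomplete. Writing $w = \sum s_i y_i$, the fiber partials $\partial w/\partial y_i = s_i$ do cut out $\op{V}(\cE|_Y)$, but the base partials $\partial w/\partial x_j = \sum_i y_i\, \partial s_i/\partial x_j$ impose the further condition $\sum_i y_i\, ds_i = 0$. Thus the singular locus of $Z_0$ is in general strictly smaller than $\op{V}(\cE|_Y)$ --- for instance, when $Y$ is smooth it collapses to the zero section $Y \hookrightarrow \op{V}(\cE|_Y)$. This does not wreck your strategy (the singular locus is still contained in $\op{V}(\cE|_Y)$, so ``supported on $\op{V}(\cE|_Y)$'' remains true, and the functor $\Phi$ you describe is the right one), but you should not present $\op{V}(\cE|_Y)$ as \emph{the} singular locus. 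The real work, as you correctly identify, is the graded Koszul-duality bookkeeping; for a complete treatment you would in any case defer to \cite{Isik} or \cite{Shipman}, exactly as the paper does.
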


\begin{proof}
 This is \cite[Theorem 3.6]{Isik} or \cite[Theorem 3.4]{Shipman}.
\end{proof}

\begin{proposition} \label{cor: isik}
 Let $X$ be a smooth variety or a global quotient stack. Consider the trivial $\mathbb{G}_m$-action on $X$ and let $\chi$ be the identity character of $\mathbb{G}_m$. For the Landau-Ginzburg model $(X, \mathbb{G}_m, \O(\chi), 0)$, one has an equivalence of categories:
 \begin{equation*}
  \dbcoh{X} \iso \dcoh{[X/\mathbb{G}_m], 0}
 \end{equation*}
 \end{proposition}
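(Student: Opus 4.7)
My plan is to deduce the proposition as a degenerate case of Theorem \ref{thm: isik}. Take the locally free sheaf $\cE = 0$ of rank zero on $X$. Then $\op{V}(\cE) = \mathbf{Spec}\,\op{Sym}(0) = X$; dilation on the (trivial) fibers is the trivial $\mathbb{G}_m$-action on $X$; and the only section $s = 0$ is trivially regular since $\op{dim}Y = \op{dim}X = \op{dim}X - \op{rank}\cE$, with zero-scheme $Y = X$. Under the canonical isomorphism $\Gamma(X, \op{Sym}(0)) \cong \Gamma(X, \mathcal{O}_X)$ this section corresponds to $w = 0$, so Theorem \ref{thm: isik} yields precisely $\dbcoh{X} \cong \dcoh{[X/\mathbb{G}_m], 0}$.

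The statement also allows $X$ to be a global quotient stack, which is not strictly covered by the letter of Theorem \ref{thm: isik}, so I would supplement with a direct construction of the equivalence. Because $\mathbb{G}_m$ acts trivially, a $\mathbb{G}_m$-equivariant coherent sheaf on $X$ is the same as a coherent sheaf $\mathcal{F}$ on $X$ together with a weight decomposition $\mathcal{F} = \bigoplus_{n \in \Z} \mathcal{F}_n$ of finite support, and tensoring by $\mathcal{O}(\chi)$ shifts the weight by one. The factorization conditions $\phi^0_{\mathcal E} \circ \phi^{-1}_{\mathcal E} = 0$ and $\phi^{-1}_{\mathcal E}(1) \circ \phi^0_{\mathcal E} = 0$ imposed by $w = 0$ then assemble the weight components into a bounded $\Z$-graded chain complex
\begin{equation*}
 \cdots \to \mathcal{E}^0_{n+1} \xrightarrow{\phi^{-1}_n} \mathcal{E}^{-1}_n \xrightarrow{\phi^0_n} \mathcal{E}^0_n \to \cdots
\end{equation*}
of coherent sheaves on $X$. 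A direct check shows that morphisms, homotopies, the translation $[1]$, and the cone construction from Section \ref{sec: LG} all match the standard ones on chain complexes, giving an equivalence of triangulated homotopy categories $K(\op{coh}[X/\mathbb{G}_m], 0) \cong K^{\op{b}}(\op{coh}\,X)$.

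To pass to the absolute derived category, the remaining task is to match totally acyclic factorizations with acyclic bounded complexes under the equivalence above. Totalizations of short exact sequences of factorizations unfold to mapping cones of termwise short exact sequences of bounded complexes, which is exactly the generating family for the thick subcategory of acyclic bounded complexes in $K^{\op{b}}(\op{coh}\,X)$. Taking Verdier quotients then yields the desired equivalence $\dcoh{[X/\mathbb{G}_m], 0} \cong \dbcoh{X}$.

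The main obstacle is this last step, namely verifying that the totalization construction in $K(\op{coh}[X/\mathbb{G}_m], 0)$ corresponds, after unfolding weights, to the usual mapping cone on $K^{\op{b}}(\op{coh}\,X)$, so that the two notions of acyclicity generate the same thick subcategory. Once that identification is pinned down from the totalization formula of \cite{BDFIK}, the rest of the proof is purely formal.
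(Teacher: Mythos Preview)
Your direct construction is essentially the paper's own proof, viewed from the inverse side. The paper defines the folding functor $F:\op{Kom}^{\op{b}}(\op{coh}X)\to\op{coh}([X/\mathbb{G}_m],0)$ sending a bounded complex $M^{\bullet}$ to the factorization with components $\bigoplus_i M^{2i}(i)$ and $\bigoplus_i M^{2i+1}(i)$, observes that this is an equivalence of abelian categories preserving homotopies, and then checks that for an acyclic $M^{\bullet}$ the image $F(M^{\bullet})$ is the totalization of the exact complex of factorizations $\cdots\to F(M^{i})\to F(M^{i+1})\to\cdots$, hence totally acyclic. Your weight-unfolding is precisely $F^{-1}$, and the ``main obstacle'' you flag is exactly the step the paper carries out in one line using this description.

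Your opening shortcut via Theorem~\ref{thm: isik} with $\cE=0$ is a pleasant observation the paper does not make. It is formally valid for smooth varieties, but as you correctly note it does not cover the quotient-stack case, and one would also want to check that the cited proofs of Theorem~\ref{thm: isik} do not tacitly assume $\op{rank}\cE>0$; so the direct argument is needed in any case.
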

 
\begin{proof}
%This is a degenerate case of the above theorem where $\mathcal E = 0$. 
  Consider the functor $F: \op{Kom}^{\op{b}}(\op{coh}X) \rightarrow \op{coh}(\left[ X/\gm \right],0)$ that sends a bounded complex $M^{\bullet}$ of coherent sheaves on $X$ to the factorization
\begin{center}
\begin{tikzpicture}[description/.style={fill=white,inner sep=2pt}]
\matrix (m) [matrix of math nodes, row sep=3em, column sep=3em, text height=1.5ex, text depth=0.25ex]
{  \bigoplus_i M^{2i}(i)\,\,\,\,\,\,\,\,\,\, & \,\,\,\,\,\,\,\,\,\,\,\,\,\,\,\,\,\,\,\, \bigoplus_i M^{2i+1}(i), \\ };
\path[->,font=\scriptsize]
(m-1-1) edge[out=30,in=150] node[above] {$\bigoplus_i d^{2i}$} (m-1-2)
(m-1-2) edge[out=210, in=330] node[below] {$\bigoplus_i d^{2i+1}$} (m-1-1);
\end{tikzpicture}
\end{center}
and that maps morphisms in the obvious way. It is clear that $F$ is an equivalence of abelian categories and preserves homotopies. For an acyclic complex $M^{\bullet}$, $F(M^{\cbullet})$ is a factorization which is the totalization of the acyclic complex of factorizations
$$ \ldots \xrightarrow{F(d_{-2})} F(M^{-1}) \xrightarrow{F(d_{-1})} F(M^0) \xrightarrow{F(d_{0})} F(M^1) \xrightarrow{F(d_{1})}\ldots . $$
Therefore $F$ induces an equivalence between     
$\dbcoh{X}$ and  $\dcoh{[X/\mathbb{G}_m], 0}$.
\end{proof}

\subsection{Semi-orthogonal decompositions} 
In this section we provide background material on semi-orthogonal decompositions and record a few facts we will need later.  Standard references are \cite{Bon, BK, BO95}.

\begin{definition}
 Let $\mathcal A \subseteq \mathcal T$ be a full triangulated subcategory. The \newterm{right orthogonal} $\mathcal A^\perp$ to $\mathcal A$ is the full subcategory of $\mathcal T$  consisting of objects $B$ such that $\op{Hom}_\cT(A,B) = 0$ for any $A \in \mathcal A$.
 The \newterm{left orthogonal} $\leftexp{\perp}{\mathcal A}$ is is the full subcategory of $\mathcal T$  consisting of objects $B$ such that $\op{Hom}_\cT(B,A) = 0$ for any $A \in \mathcal A$.
\end{definition}
The left and right orthogonals are naturally triangulated subcategories.

\begin{definition}\label{def:SO}
A \newterm{weak semi-orthogonal decomposition} of a triangulated category, $\mathcal T$, is a sequence of full triangulated subcategories, $\mathcal A_1, \dots ,\mathcal A_m$, in $\mathcal T$ such that $\mathcal A_i \subset \mathcal A_j^{\perp}$ for $i<j$ and, for every object $T \in \mathcal T$, there exists a diagram:
\begin{center}
\begin{tikzpicture}[description/.style={fill=white,inner sep=2pt}]
\matrix (m) [matrix of math nodes, row sep=1em, column sep=1.5em, text height=1.5ex, text depth=0.25ex]
{  0 & & T_{m-1} & \cdots & T_2 & & T_1 & & T   \\
   & & & & & & & &  \\
   & A_m & & & & A_2 & & A_1 & \\ };
\path[->,thick]
 (m-1-1) edge (m-1-3) 
 (m-1-3) edge (m-1-4)
 (m-1-4) edge (m-1-5)
 (m-1-5) edge (m-1-7)
 (m-1-7) edge (m-1-9)

 (m-1-9) edge (m-3-8)
 (m-1-7) edge (m-3-6)
 (m-1-3) edge (m-3-2)

 (m-3-8) edge node[sloped] {$ | $} (m-1-7)
 (m-3-6) edge node[sloped] {$ | $} (m-1-5)
 (m-3-2) edge node[sloped] {$ | $} (m-1-1)
;
\end{tikzpicture}
\end{center}
where all triangles are distinguished and $A_k \in \mathcal A_k$. We shall denote a weak semi-orthogonal decomposition by $\langle \mathcal A_1, \ldots, \mathcal A_m \rangle$. If $\mathcal A_i$ are essential images of fully-faithful functors, $\Upsilon_i: \mathcal A_i \to \mathcal T$, we may also denote the weak semi-orthogonal decomposition by
\begin{displaymath}
 \langle \Upsilon_1,\ldots,\Upsilon_m \rangle.
\end{displaymath}
\end{definition}

\begin{lemma} \label{lemma: functoriality of pieces of SOD}
 The assignments $T \mapsto T_i$ and $T \mapsto A_i$ appearing in the definition of a weak semi-orthogonal decomposition are unique and functorial. 
\end{lemma}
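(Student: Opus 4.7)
I would proceed by induction on $m$, reducing to the crux case $m = 2$, where all the content of the statement already appears.

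For $m=2$, a weak semi-orthogonal decomposition $\langle \cA_2, \cA_1 \rangle$ assigns to each $T$ a distinguished triangle $A_2 \to T \to A_1 \to A_2[1]$ with $A_i \in \cA_i$. To establish uniqueness, take two such triangles $A_2 \to T \to A_1 \to A_2[1]$ and $A_2' \to T \to A_1' \to A_2'[1]$, and apply $\op{Hom}(A_2, -)$ to the second. Since $\op{Hom}(A_2, A_1'[k]) = 0$ for every $k$ by semi-orthogonality (as $\cA_1 \subseteq \cA_2^\perp$ and $\cA_1$ is closed under shifts), the long exact sequence collapses to an isomorphism $\op{Hom}(A_2, A_2') \cong \op{Hom}(A_2, T)$. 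Hence the structural morphism $A_2 \to T$ factors uniquely as $A_2 \to A_2' \to T$; symmetrically one obtains $A_2' \to A_2 \to T$. The same uniqueness-of-factorization argument, applied to the identity, forces the two composites $A_2 \to A_2' \to A_2$ and $A_2' \to A_2 \to A_2'$ to be identities, giving an isomorphism $A_2 \cong A_2'$. Completing to a morphism of triangles via TR3 produces the accompanying isomorphism $A_1 \cong A_1'$, proving uniqueness in the $m=2$ case.

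Functoriality in the $m=2$ case runs on the same principle applied to a morphism $f: T \to T'$. The composite $A_2 \to T \xrightarrow{f} T' \to A_1'$ vanishes since $\op{Hom}(A_2, A_1'[k]) = 0$, so $f \circ (A_2 \to T)$ factors uniquely through $A_2' \to T'$, yielding a canonical morphism $A_2 \to A_2'$. TR3 then supplies a compatible $A_1 \to A_1'$. The uniqueness clauses guarantee that identities map to identities and that composites of morphisms factor through composites of the induced maps, so the assignments $T \mapsto A_i$ and $T \mapsto T_i$ (with $T_1 = A_2$) become functors $\cT \to \cA_i$.

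For the inductive step I would regard $\langle \cA_m, \ldots, \cA_1 \rangle$ as a two-step decomposition $\langle \cA', \cA_1 \rangle$, where $\cA'$ is the full triangulated subcategory of $\cT$ generated by $\cA_m, \ldots, \cA_2$. The base case applied to this coarsening produces functorial assignments $T \mapsto T_1 \in \cA'$ and $T \mapsto A_1 \in \cA_1$. Inside $\cA'$, the sequence $\cA_m, \ldots, \cA_2$ is again a weak semi-orthogonal decomposition (the semi-orthogonality relations are inherited), so the inductive hypothesis applied to $T_1 \in \cA'$ extracts functorial $A_2, \ldots, A_m$ and $T_2, \ldots, T_{m-1}$. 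Composing these functors with $T \mapsto T_1$ gives the desired functoriality of each assignment on $\cT$.

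The step where I would expect to spend the most care is verifying that the generated subcategory $\cA'$ genuinely receives a weak semi-orthogonal decomposition by $\cA_m, \ldots, \cA_2$, so the induction can be invoked. The semi-orthogonality inside $\cA'$ is clear from the hypothesis, and the membership $T_1 \in \cA'$ follows by iteratively taking cones in the defining diagram, since $T_1$ is built from $A_2, \ldots, A_m$ by successive distinguished triangles. Once this point is handled cleanly, the induction closes and the lemma follows.
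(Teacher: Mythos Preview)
The paper does not give its own proof of this lemma; it simply writes ``This is standard, see e.g.\ \cite[Lemma 2.4]{Kuz09}.'' Your argument is exactly that standard induction, and it is correct.

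Two small points worth tidying. First, your ordering inside the angle brackets is reversed relative to the paper's Definition~\ref{def:SO}, which writes $\langle \cA_1, \ldots, \cA_m \rangle$ with $\cA_i \subset \cA_j^\perp$ for $i<j$; so the two-term case is $\langle \cA_1, \cA_2 \rangle$, not $\langle \cA_2, \cA_1 \rangle$, and in the inductive step the coarsening should read $\langle \cA_1, \cA' \rangle$. Second, after invoking TR3 to produce the induced map $A_1 \to A_1'$, it is worth noting explicitly that this map is unique: apply $\op{Hom}(-, A_1')$ to the triangle $A_2 \to T \to A_1$ and use $\op{Hom}(A_2[k], A_1') = 0$ to get $\op{Hom}(A_1, A_1') \cong \op{Hom}(T, A_1')$. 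TR3 by itself only gives existence, and it is the uniqueness on both sides that forces compatibility with identities and compositions, hence functoriality of $T \mapsto A_1$.
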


\begin{proof}
 This is standard, see e.g. \cite[Lemma 2.4]{Kuz09}. 
\end{proof}

Closely related to the notion of a semi-orthogonal decomposition is the notion of a left/right admissible subcategory of a triangulated category.

\begin{definition}
 Let $\alpha: \mathcal A \to \mathcal T$ be the inclusion of a full triangulated subcategory of $\mathcal T$. The subcategory, $\mathcal A$, is called \newterm{right admissible} if $\alpha$ has a right adjoint, denoted $\alpha^!$, and \newterm{left admissible} if $\alpha$ has a left adjoint, denoted $\alpha^*$. A full triangulated subcategory is called \newterm{admissible} if it is both right and left admissible.
\end{definition}

\begin{definition}
 A \newterm{semi-orthogonal decomposition} is a weak semi-orthogonal decomposition
 \[
 \langle \mathcal A_1, \ldots, \mathcal A_m \rangle
 \]
 such that each $\mathcal A_i$ is admissible.  The notation is left unchanged.
\end{definition}

\subsection{Elementary wall-crossings} \label{vgit} 

In this section, we review part of the relationship between variations of GIT quotients \cite{Tha96, DH98} and derived categories, following \cite{BFK12}. While consideration of the general theory was inspirational to our approach to Homological Projective Duality, it is sufficient for this paper to consider only the simplest types of variations of GIT quotients, namely elementary wall crossings.  

Let $Q$ be a smooth, quasi-projective variety and let $G$ be a reductive linear algebraic group. Let
\begin{displaymath}
 \sigma: G \times Q \to Q
\end{displaymath}
denote an action of $G$ on $Q$. Recall that a \newterm{one-parameter subgroup}, $\lambda: \mathbb{G}_m \to G$, is an injective homomorphism of algebraic groups.

From $\lambda$, we can construct some subvarieties of $Q$. We let $Z_{\lambda}^0$ be a choice of connected component of the fixed locus of $\lambda$ on $Q$. Set
\begin{displaymath}
 Z_{\lambda} := \{ q \in Q \mid \lim_{t \to 0} \sigma(\lambda(t),q) \in Z_{\lambda}^0\}.
\end{displaymath}
The subvariety $Z_{\lambda}$ is called the \newterm{contracting locus} associated to $\lambda$ and $Z_{\lambda}^0$. If $G$ is Abelian, $Z_{\lambda}^0$ and $Z_{\lambda}$ are both $G$-invariant subvarieties. Otherwise, we must consider the orbits
\[
S_{\lambda} := G \cdot Z_\lambda ,
S_{\lambda}^0 := G \cdot Z_\lambda^0.
\]
Also, let
\begin{displaymath}
 Q_{\lambda} := Q \setminus S_{\lambda}.
\end{displaymath}

We will be interested in the case where $S_{\lambda}$ is a smooth closed subvariety satisfying a certain condition.  To state this condition we need the following group attached to any one-parameter subgroup
\begin{displaymath}
 P(\lambda) := \{ g \in G \mid \lim_{\alpha \to 0} \lambda(\alpha) g \lambda(\alpha)^{-1} \text{ exists}\}.
\end{displaymath}

\begin{definition} \label{definition: HKKN strat}
 Assume $Q$ is a smooth variety with a $G$-action. An \newterm{elementary HKKN stratification} of $Q$ is a disjoint union
\[
\mathfrak{K}: Q = Q_{\lambda} \sqcup S_{\lambda},
\]
obtained from the choice of a one-parameter subgroup $\lambda: \mathbb{G}_m \to G$, together with the choice of a connected component, denoted $Z_{\lambda}^0$, of the fixed locus of $\lambda$ such that
 \begin{itemize}
   \item $S_{\lambda}$ is closed in $X$.
 \item The morphism,
\begin{align*}
 \tau_{\lambda}: [(G \times Z_{\lambda})/P(\lambda)] & \to S_{\lambda} \\
 (g,z) & \mapsto g \cdot z
\end{align*}
is an isomorphism where $p \in P(\lambda)$ acts by
\begin{displaymath}
 (p,(g,z)) \mapsto (gp^{-1},p \cdot z).
\end{displaymath}
 \end{itemize}
\end{definition}

We will need to attach an integer to an elementary HKKN stratification. We restrict the relative canonical bundle $\omega_{S_{\lambda}|Q}$ to any fixed point $q \in Z_{\lambda}^0$.  This yields a one-dimensional vector space which is equivariant with respect to the action of $\lambda$.  

\begin{definition}
 The \newterm{weight of the stratum} $S_{\lambda}$ is the $\lambda$-weight of $\omega_{S_{\lambda}/ Q}|_{Z_{\lambda}^0}$. It is denoted by $t(\mathfrak{K})$.
\end{definition}

Furthermore, given a one parameter subgroup $\lambda$ we may also consider its composition with inversion
\[
-\lambda(t) := \lambda(t^{-1}) = \lambda(t)^{-1},
\]
and ask whether this provides an HKKN stratification as well. This leads to the following definition.

\begin{definition}
 An \newterm{elementary wall-crossing}, $(\mathfrak{K}^+,\mathfrak{K}^-)$, is a pair of elementary HKKN stratifications,
 \begin{align*}
 Q = & Q_{\lambda} \sqcup S_{\lambda},\\
 Q = & Q_{-\lambda} \sqcup S_{-\lambda},
 \end{align*}
 such that $Z^0_\lambda = Z^0_{-\lambda}$. We often let $Q_+ := Q_{\lambda}$ and $Q_{-} := Q_{-\lambda}$.
\end{definition}

Let $C(\lambda)$ denote the centralizer of the $1$-parameter subgroup $\lambda$. For an elementary wall-crossing set
\[
\mu = - t(\mathfrak{K}^+) + t(\mathfrak{K}^-).
\]

\begin{theorem} \label{thm: bfkvgitlg}
 Let $Q$ be a smooth, quasi-projective variety equipped with the action of a reductive linear algebraic group, $G$. Let $w \in \op{H}^0(Q,\mathcal L)^G$ be a $G$-invariant section of a $G$-invertible sheaf, $\mathcal L$. Suppose we have an elementary wall-crossing, $(\mathfrak{K}^+,\mathfrak{K}^-)$, 
 \begin{align*}
  Q & = Q_+ \sqcup S_{\lambda} \\
  Q & = Q_- \sqcup S_{-\lambda},
 \end{align*}
and assume that $\mathcal L$ has weight zero on $Z_{\lambda}^0$ and that $S^0_{\lambda}$ admits a $G$ invariant affine open cover. Fix any $D\in \Z$. 
 
 \begin{enumerate}
  \item If $\mu > 0$, then there are fully-faithful functors,
  \begin{displaymath}
   \Phi^+_D: \dcoh{[Q_-/G],w|_{Q_-}} \to \dcoh{[Q_+/G],w|_{Q_+}},
  \end{displaymath}
  and, for $-t(\mathfrak{K}^-) + D \leq j \leq -t(\mathfrak{K}^+) + D -1$,
  \begin{displaymath}
   \Upsilon_j^+: \dcoh{[Z_{\lambda}^0/C(\lambda)],w_{\lambda}}_j \to \dcoh{[Q_+/G],w|_{Q_+}},
  \end{displaymath}
  and a semi-orthogonal decomposition,
  \begin{displaymath}
   \dcoh{[Q_+/G],w|_{Q_+}} = \langle  \Upsilon^+_{-t(\mathfrak{K}^-)+D}, \ldots, \Upsilon^+_{-t(\mathfrak{K}^+)+D-1}, \Phi^+_D  \rangle.
  \end{displaymath}
  \item If $\mu = 0$, then there is an exact equivalence,
  \begin{displaymath}
   \Phi^+_D: \dcoh{[Q_-/G],w|_{Q_-}} \to \dcoh{[Q_+/G],w|_{Q_+}}.
  \end{displaymath}
  \item If $\mu < 0$, then there are fully-faithful functors,
  \begin{displaymath}
   \Phi^-_D: \dcoh{[Q_+/G],w|_{Q_+}} \to \dcoh{[Q_-/G],w|_{Q_-}},
  \end{displaymath}
  and, for $-t(\mathfrak{K}^+) + D \leq j \leq -t(\mathfrak{K}^-) + D -1$,
  \begin{displaymath}
   \Upsilon_j^-: \dcoh{[Z_{\lambda}^0/C(\lambda)],w_{\lambda}}_j \to \dcoh{[Q_-/G],w|_{Q_-}},
  \end{displaymath}
  and a semi-orthogonal decomposition,
  \begin{displaymath}
   \dcoh{[Q_-/G],w|_{Q_-}} = \langle  \Upsilon^-_{-t(\mathfrak{K}^+)+D}, \ldots, \Upsilon^-_{-t(\mathfrak{K}^-)+D-1}, \Phi^-_D \rangle.
  \end{displaymath}
 \end{enumerate}
\end{theorem}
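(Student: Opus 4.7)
The plan is to follow the window-subcategory strategy of \cite{BFK12}, adapted to matrix factorizations. For each integer $D$, I would define full triangulated subcategories $\mathcal{W}_D^{\pm} \subset \dcoh{[Q/G],w}$, the \emph{grade-restriction windows}, consisting of factorizations $\mathcal{E}$ whose components restricted to $Z_\lambda^0$ decompose into $\lambda$-weight pieces lying in an interval of width $t(\mathfrak{K}^{\pm})$ determined by $D$. The assumption that $\mathcal L$ has weight zero on $Z_\lambda^0$ is used here to ensure that $\mathcal E^0$ and $\mathcal E^{-1}(1)$ satisfy the same weight constraint, so that the window is genuinely a triangulated subcategory closed under the factorization differentials and the shift $[1]$.

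The key technical step is to prove that each restriction functor
\[
r_\pm \colon \mathcal{W}_D^{\pm} \longrightarrow \dcoh{[Q_\pm/G],\, w|_{Q_\pm}}
\]
is an equivalence. For fully-faithfulness, I would combine the isomorphism $S_{\pm\lambda} \iso [(G \times Z_\lambda)/P(\lambda)]$ from Definition \ref{definition: HKKN strat} with the hypothesis that $S_\lambda^0$ admits a $G$-invariant affine open cover to construct, locally on the stratum, a Koszul-type resolution of the structure sheaf $\mathcal O_{S_{\pm\lambda}}$ whose $\lambda$-weights are complementary to those present in $\mathcal W_D^{\pm}$. A local-to-global computation then shows that factorizations supported on $S_{\pm\lambda}$ have no morphisms to or from objects in the window, so $r_\pm$ kills the kernel of localization at $[Q_\pm/G]$ and is faithful. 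Essential surjectivity proceeds by lifting any factorization on $[Q_\pm/G]$ to $[Q/G]$ and then successively mutating out the weight sectors outside the window using the same Koszul resolution, producing a quasi-isomorphic object in $\mathcal W_D^{\pm}$.

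With both restriction functors understood, the three cases follow by comparing the two windows inside $\dcoh{[Q/G],w}$. When $\mu = 0$ the two windows agree, yielding the equivalence $\Phi_D^+ \iso r_+ \circ r_-^{-1}$ in part (b). When $\mu > 0$ one has a strict inclusion of $\mathcal W_D^-$ into $\mathcal W_D^+$, so the image $\Phi_D^+ := r_+ \circ r_-^{-1}$ is admissible, and its orthogonal complement inside $\dcoh{[Q_+/G], w|_{Q_+}}$ is identified with the direct sum of weight-$j$ components of $\dcoh{[Z_\lambda^0/C(\lambda)],w_\lambda}$ for $-t(\mathfrak K^-)+D \le j \le -t(\mathfrak K^+)+D-1$. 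The functors $\Upsilon_j^+$ are then constructed by taking a weight-$j$ factorization on $Z_\lambda^0$, pushing it forward along the inclusions $Z_\lambda^0 \hookrightarrow S_\lambda \hookrightarrow Q$ (using $\tau_\lambda$ for the middle arrow), and restricting to $[Q_+/G]$; the semi-orthogonality among the $\Upsilon_j^+$ and with $\Phi_D^+$ is a direct weight calculation. Case $\mu<0$ is obtained by swapping the roles of $\lambda$ and $-\lambda$.

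The main obstacle will be the Koszul/mutation argument for essential surjectivity of $r_\pm$ in the matrix factorization setting, because factorizations lack cohomology and the usual truncation techniques from $\Db$ of coherent sheaves do not apply directly. One must instead work with the totalization construction of \cite[Definition 2.10]{BDFIK} and the notion of totally acyclic factorizations to ensure that the mutations through excluded weight sectors produce isomorphisms in the absolute derived category rather than merely in the homotopy category. A secondary subtle point is the bookkeeping of the residual $C(\lambda)/\lambda(\mathbb G_m)$-action on $Z_\lambda^0$, which controls the precise splitting $\dcoh{[Z_\lambda^0/C(\lambda)],w_\lambda} = \bigoplus_j \dcoh{[Z_\lambda^0/C(\lambda)],w_\lambda}_j$ entering the statement.
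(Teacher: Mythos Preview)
Your proposal is essentially correct and indeed outlines the window-subcategory approach of \cite{BFK12}, but you should note that the paper's own ``proof'' of this statement is simply the single line ``This is \cite[Theorem 3.5.2]{BFK12}.'' The theorem is quoted here as background, not proved; the authors defer entirely to the cited reference. Your sketch accurately reconstructs the strategy one finds there: grade-restriction windows $\mathcal W_D^{\pm}$, equivalence of the restriction functors $r_\pm$ via Koszul-type resolutions along the strata, and the comparison of windows to produce the semi-orthogonal decompositions. So there is no discrepancy in approach---you have simply written out what the paper chose to cite.

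One minor correction: in your description of the orthogonal complement you speak of a ``direct sum'' of weight-$j$ components, but the decomposition produced is semi-orthogonal, not orthogonal; the pieces $\Upsilon_j^+$ are ordered and there are nontrivial $\op{Ext}$'s in one direction. Also, your remark about ``the main obstacle'' being the mutation argument in the factorization setting is well observed, but since the paper is only citing the result, this obstacle belongs to \cite{BFK12} rather than to the present paper.
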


\begin{proof}
 This is \cite[Theorem 3.5.2]{BFK12}.
\end{proof}

The categories, $\dcoh{[Z_{\lambda}^0/C(\lambda)],w_{\lambda}}_j$, appearing in Theorem \ref{thm: bfkvgitlg} are the full subcategories consisting of objects of $\lambda$-weight $j$ in $\dcoh{[Z_{\lambda}^0/C(\lambda)],w_{\lambda}}$. For more details, we refer the reader to \cite{BFK12}. In our situation, we will only need the conclusion of the following lemma. We set 
\begin{displaymath}
 Y_{\lambda} : = [ Z_{\lambda}^0 / (C(\lambda)/\lambda) ].
\end{displaymath}

\begin{lemma} \label{lemma: wall compositions are same}
 We have an equivalence,
 \begin{displaymath}
  \dcoh{Y_{\lambda},w_{\lambda}} \cong \dcoh{[Z_{\lambda}^0/C(\lambda)],w_{\lambda}}_0.
 \end{displaymath}

 Further, assume that there there is a character, $\chi: C(\lambda) \to \mathbb{G}_m$, such that
 \begin{displaymath}
  \chi \circ \lambda(t) = t^l.
 \end{displaymath}
 Then, twisting by $\chi$ provides an equivalence,
 \begin{displaymath}
  \dcoh{[Z^0_{\lambda}/C(\lambda)],w_{\lambda}}_r  \cong \dcoh{[Z^0_{\lambda}/C(\lambda)],w_{\lambda}}_{r+l},
 \end{displaymath}
 for any $r \in \Z$.
\end{lemma}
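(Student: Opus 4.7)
The strategy is to exploit the fact that $\lambda$ acts trivially on its own fixed locus $Z_\lambda^0$ and is central in the centralizer $C(\lambda)$, so the quotient group $C(\lambda)/\lambda$ acts on $Z_\lambda^0$ and the projection $C(\lambda) \to C(\lambda)/\lambda$ induces a morphism of quotient stacks
$$\pi: [Z_\lambda^0 / C(\lambda)] \longrightarrow Y_\lambda$$
which is a trivial $\mathbb{G}_m$-gerbe banded by $\lambda \cong \mathbb{G}_m$. Both claims will then follow from a systematic decomposition of everything in sight by $\lambda$-weight.

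For the first equivalence, the plan is to observe first that any $C(\lambda)$-equivariant quasicoherent sheaf on $Z_\lambda^0$ carries a canonical $\Z$-grading by weights of $\lambda$ (since $\lambda$ acts trivially on $Z_\lambda^0$), and that centrality of $\lambda$ in $C(\lambda)$ makes this decomposition functorial and compatible with the residual $C(\lambda)/\lambda$-equivariant structure; moreover, the weight-zero summand is precisely the essential image of $\pi^*$. By the standing hypothesis of Theorem \ref{thm: bfkvgitlg}, the sheaf $\mathcal L$ has $\lambda$-weight zero on $Z_\lambda^0$, so $w_\lambda$ is a weight-zero section and the factorization structure maps $\phi_{\mathcal E}^{-1}, \phi_{\mathcal E}^0$ preserve the grading. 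Hence the abelian category $\op{coh}([Z_\lambda^0/C(\lambda)], w_\lambda)$ splits canonically as a direct sum indexed by $\Z$, with the weight-zero summand equivalent to $\op{coh}(Y_\lambda, w_\lambda)$ via $\pi^*$. Because the decomposition commutes with morphisms, homotopies, and totalizations of short exact sequences, it descends to $K$ and through the Verdier quotient by totally acyclic objects, yielding the claim.

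For the second claim, the plan is to produce the autoequivalence of $\op{coh}([Z_\lambda^0/C(\lambda)], w_\lambda)$ that tensors both components $\mathcal E^{-1}, \mathcal E^0$ with the $C(\lambda)$-equivariant line bundle $\mathcal O(\chi)$ and leaves the structure maps unchanged (this makes sense because $\mathcal O(\chi)$ is a line bundle on the base $Z_\lambda^0$ with added equivariant structure, so it does not interact with $\mathcal L$ or $w_\lambda$). This is plainly an equivalence with inverse $\mathcal O(-\chi) \otimes -$, and since $\chi \circ \lambda(t) = t^l$ the twist shifts $\lambda$-weights by $l$. The functor preserves short exact sequences, homotopies, and totalizations, so it descends to an equivalence of absolute derived categories restricting to equivalences between the stated weight subcategories.

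The main obstacle is essentially bookkeeping: one must verify that the $\lambda$-weight decomposition of the abelian category of factorizations is preserved by the class of totalizations of short exact sequences, so that it passes cleanly through the homotopy category and the Verdier quotient by $\op{acycl}$, and that $\pi^*$ induces the stated equivalence onto the weight-zero summand rather than merely embedding it. Both reduce to the canonicity of the weight decomposition and its commutation with cones and direct sums, and no genuine technical difficulty is expected.
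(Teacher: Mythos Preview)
Your proposal is correct and follows essentially the same approach as the paper: the paper's proof simply observes that a quasi-coherent sheaf on $Y_\lambda$ is the same thing as a $C(\lambda)$-equivariant sheaf on $Z_\lambda^0$ of $\lambda$-weight zero, and that twisting by $\chi$ is an autoequivalence shifting weights by $l$. Your write-up is more detailed about the passage through homotopies, totalizations, and the Verdier quotient, but the underlying idea is identical.
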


\begin{proof}
 This is Lemma 3.4.4 of \cite{BFK12}; we give the very simple and short proof here. A quasi-coherent sheaf on $Y_{\lambda}$ is a quasi-coherent $C(\lambda)$-equivariant sheaf on $Z_{\lambda}^0$ for which $\lambda$ acts trivially, i.e. of $\lambda$-weight zero.  For the latter statement just observe that twisting with $\chi$ is an autoequivalence of $\dbcoh{[Z_{\lambda}^0/C(\lambda)]}$ which brings range to target and its inverse does the reverse.
\end{proof}

\subsection{Homological Projective Duality}

In this section, we provide an introduction to Homological Projective Duality (HPD) following \cite{KuzHPD}. To make the ideas more transparent, we start by considering HPD over a point. We then make the definitions we need for the relative setting considered in the rest of this paper. 

Let $X$ be a smooth projective variety equipped with a morphism $f: X \to \P(V)$. 

We have a canonical section of $\mathcal O_{\P(V)}(1) \boxtimes \mathcal O_{\P(V^*)}(1)$ determined as follows. Under the natural isomorphism,
\begin{displaymath}
 V \otimes V^* \cong \op{End}(V),
\end{displaymath}
the identity map on $V$ corresponds to an element $u \in V \otimes V^*$. We define a section
\begin{displaymath}
 \theta_V \in \Gamma(\mathcal O_{\P(V)}(1) \boxtimes \mathcal O_{\P(V^*)}(1)) \cong V\otimes V^* 
\end{displaymath}
by taking the image of $u$ under the isomorphism above. 

Let $\mathcal O_X(1)$ denote the pullback $f^*\mathcal O_{\P(V)}(1)$. We can also pull back $\mathcal O_{\P(V)}(1) \boxtimes \mathcal O_{\P(V^*)}(1)$ to $X \times \P(V^*)$. Let $\theta_X$ denote the pull back of $\theta_V$.

\begin{definition}
 The zero locus of $\theta_X$ is called the \newterm{universal hyperplane section} of $f$. It is denoted by $\cX$.
\end{definition}

The universal hyperplane section comes equipped with two natural morphisms, 
\begin{displaymath}
p: \cX \to X \tand q: \cX \to \P(V^*).
\end{displaymath}
The fiber of $q$, $\mathcal X_H$, over $H \in \P(V^*)$ is exactly the hyperplane section of $X$ corresponding to $H$. 

\begin{remark}
 Recall that when $X$ is smooth and $f$ is an embedding, the projective dual to $X$ is the closed subset
 \begin{displaymath}
  X^{\vee} := \{H \in \P(V^*) \mid X_H \text{ is singular}\}.
 \end{displaymath}
 with its reduced, induced scheme structure. Thus $X^{\vee}$ is the non-regular, i.e. critical, locus of $q: \cX \to \P(V^*)$ in $\P(V^*)$.
\end{remark}

Homological Projective Duality is a phenomenon that can be considered as a lifting of the notion of classical projective duality to non-commutative geometry. 
The starting data for HPD is a smooth variety, $X$, together with a map to a projective space, $f: X \to \P(V)$, and a special type of a semi-orthogonal decomposition called a Lefschetz decomposition.  We now provide the setup to define a Lefschetz decomposition.

\begin{definition}
  Let $B$ be an algebraic variety and $\mathcal T$ triangulated category.  A $B$\newterm{-linear structure} on $\mathcal T$ is a $\dperf(B)$-module structure
\[
 F: \dperf(B) \otimes \cT \to \cT
\]
on $\cT$.  
\end{definition}

\begin{definition}
An exact functor
\[
\Phi : \mathcal T \to \mathcal T'
\]
between $B$-linear triangulated categories with respect to $F$ and $F'$ is called $B$\newterm{-linear} if there are bi-functorial isomorphisms 
\[
\Phi(F(A \otimes T)) \cong F'(\Phi(A) \otimes T)
\]
for any $T \in \mathcal T, A \in \dperf(B)$.
\end{definition}

Now let $B = \P(V)$ and consider a $\P(V)$-linear category $\mathcal T$ with respect to $F$.  To simplify notation, denote by $(s)$ the functor of tensoring with $F(\O(s))$.  

\begin{definition}
A \newterm{Lefschetz decomposition} of a $\P(V)$-linear category $\mathcal T$ is a semi-orthogonal decomposition of the form,
\[
\mathcal T =\langle \mathcal A_0,\mathcal A_1(1),\ldots,\cA_i(i)\rangle,
\]
where 
\[
0\subset\mathcal A_i\subset\mathcal A_{i-1}\subset\ldots\subset\mathcal A_1\subset\mathcal A_0\subset\mathcal T
\]
is a chain of admissible subcategories of $\mathcal T$ and $\mathcal A_s(s)$ denotes the essential image of the category $\mathcal A_s$ after application of the functor $(s)$. 
\end{definition}

\begin{definition}
A \newterm{dual Lefschetz decomposition} of a $\P(V^*)$-linear category $\mathcal T'$ is a semi-orthogonal decomposition of the form,
\[
\mathcal T' =\langle \mathcal B_{j-1}(1-j),\mathcal B_{j-2}(2-j),\ldots,\mathcal B_0 \rangle ,
\]
where 
\[
0\subset\mathcal B_{j-1}\subset\mathcal B_{j-2}\subset\ldots\subset\mathcal B_1\subset\mathcal B_0\subset\mathcal T'
\]
is a chain of admissible subcategories of $\mathcal T'$ and $\mathcal B_s(s)$ denotes the essential image of the category $\mathcal B_s$ after application of the functor $(s)$. 
\end{definition}

 Now consider a morphism $f: X \to \P(V)$.  The most important property of a Lefschetz decomposition, given by the following proposition, is that it induces a semi-orthogonal decomposition on the derived category of any linear section of $X$.  This result, and the proposition succeeding it follow from the results in \cite{KuzHPD} and \cite{KuzBaseChange}. We give proofs for these two statements for the sake of completion.  

\begin{proposition} \label{prop: X breakdown}
Consider a morphism $f: X \to \P(V)$ and a Lefschetz decomposition
\[
\dbcoh{X} = \langle \mathcal A_0,\mathcal A_1(1),\ldots,\cA_i(i)\rangle
\]
with respect to $f^*$.
Let $L \subseteq V^*$ be a linear subspace of dimension $r$, $L^{\perp}$ its orthogonal in $V$, and let
\[
X_L := X \times_{\P(V)} \P(L^{\perp})
\]
be a complete linear section of $X$ i.e. $\op{dim} \ X_L = \op{dim} \ X - \op{dim} \ L$. There is a semi-orthogonal decomposition
 \begin{displaymath}
  \dbcoh{X_L}= \langle \mathcal C_L,\mathcal A_r(r),\ldots,\cA_i(i) \rangle.
 \end{displaymath}
 of $\dbcoh{X_L}$ where the functor,
 \begin{displaymath}
  \mathcal A_j(j) \to \dbcoh{X_L},
 \end{displaymath}
 is the composition, 
 \begin{displaymath}
  \mathcal A_j(j) \ra \dbcoh{X} \ra \dbcoh{X_L}
 \end{displaymath}
 of the inclusion and derived restriction to $X_L$.
\end{proposition}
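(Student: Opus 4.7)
The plan is to establish the semi-orthogonality, fully-faithfulness, and admissibility of the listed pieces in $\dbcoh{X_L}$ by using the Koszul resolution of the complete intersection $X_L\hookrightarrow X$ together with the semi-orthogonality of the given Lefschetz decomposition on $\dbcoh{X}$. First, the subspace $L\subseteq V^*=\mathrm{H}^0(\P(V),\O(1))$ yields a regular section of $L^*\otimes\O_X(1)$ with zero scheme $X_L$ — regularity comes from the hypothesis $\op{dim}X_L = \op{dim}X - r$ — and hence a Koszul resolution
\[
0\to\Lambda^rL\otimes\O_X(-r)\to\cdots\to L\otimes\O_X(-1)\to\O_X\to i_*\O_{X_L}\to 0,
\]
where $i\colon X_L\hookrightarrow X$. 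Adjunction and the projection formula then identify $\mathrm{RHom}_{X_L}(i^*A, i^*B)\cong\mathrm{RHom}_X(A, B\otimes^{L} i_*\O_{X_L})$ for any $A,B\in\dbcoh{X}$, producing a spectral sequence with $E_1$-terms $\Lambda^sL\otimes\mathrm{Ext}^*_X(A, B(-s))$ for $0\le s\le r$, converging to $\mathrm{Ext}^*_{X_L}(i^*A, i^*B)$.

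Next I would feed Lefschetz pieces into this machine. For $A\in\mathcal A_{j_1}$ and $B\in\mathcal A_{j_2}$ with $r\le j_2\le j_1\le i$, apply the spectral sequence to $A(j_1)$ and $B(j_2)$. The $E_1$-terms become $\Lambda^sL\otimes\mathrm{Ext}^*_X(A(j_1), B(j_2-s))$; since $j_2-s\ge 0$ and the chain $\mathcal A_{j_2}\subseteq\mathcal A_{j_2-s}$ places $B(j_2-s)$ in the Lefschetz piece $\mathcal A_{j_2-s}(j_2-s)$, Lefschetz semi-orthogonality in $\dbcoh{X}$ makes this term vanish precisely when $j_1>j_2-s$. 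For $j_1>j_2$ this is automatic for every $s\in[0,r]$, while for $j_1=j_2$ only the $s=0$ summand survives and equals $\mathrm{Ext}^*_X(A,B)$. This yields simultaneously (a) semi-orthogonality of $i^*\mathcal A_{j_1}(j_1)$ and $i^*\mathcal A_{j_2}(j_2)$ in $\dbcoh{X_L}$ for $j_1>j_2\ge r$, and (b) full faithfulness of the composite $\mathcal A_j(j)\hookrightarrow\dbcoh{X}\xrightarrow{i^*}\dbcoh{X_L}$ for each $j\ge r$.

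Finally, I would define $\mathcal C_L$ as the right orthogonal to $\langle i^*\mathcal A_r(r),\ldots,i^*\mathcal A_i(i)\rangle$ in $\dbcoh{X_L}$ and verify that this produces a bona fide semi-orthogonal decomposition. For this, each image $i^*\mathcal A_j(j)$ must be admissible in $\dbcoh{X_L}$; this is the main obstacle. Starting from admissibility of $\mathcal A_j(j)\subset\dbcoh{X}$, one transports adjoints through the adjunction $i^*\dashv i_*$ using the base-change framework of \cite{KuzBaseChange}, which is precisely designed to make $\P(V)$-linear admissible pieces descend to linear sections. The Koszul spectral-sequence computation itself is essentially routine bookkeeping once the projection formula is in place; the real subtlety lies in guaranteeing the adjoints on the orthogonal complement without assuming smoothness of $X_L$.
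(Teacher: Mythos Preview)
Your proposal is correct and follows essentially the same route as the paper: both arguments use the Koszul resolution of $\mathcal O_{X_L}$ coming from the regularity hypothesis, tensor it against the relevant $\mathrm{RHom}$, and then read off semi-orthogonality and full faithfulness from the Lefschetz vanishing $\mathrm{Hom}_X(\mathcal A_{j_1}(j_1),\mathcal A_{j_2-s}(j_2-s))=0$ for $j_1>j_2-s$. The paper phrases the computation as an exact sequence of hypercohomology rather than a spectral sequence, but this is only a cosmetic difference. Your discussion of admissibility via \cite{KuzBaseChange} is in fact more careful than the paper's proof, which simply stops after establishing full faithfulness and semi-orthogonality and leaves the admissibility of the images (and hence the existence of $\mathcal C_L$ as a genuine complement) implicit.
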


\begin{proof}
Let $\delta: X_L \to X$ be the inclusion. 
%The statement is equivalent to the fact that the restriction $\delta^*: \dbcoh{X} \to \dbcoh{X_L}$ is fully-faithful on the full subcategory 
%$$\langle \cA_r(r),\dots,\cA_i(i) \rangle .$$
Let $A_s\in \cA_s(s)$ and $A_t \in \cA_t(t)$.  
Restrict the Koszul resolution on $L$ to $X$ to obtain an exact complex 
\[
0 \to \wedge^r L \otimes_k \O_X(-r) \to \cdots \to   \O_X \to \O_{X_L} \to 0
\]
and tensor this complex with $A_t \otimes_{\O_X} A_s^{\vee}$ to get
\[
  0 \to \wedge^r L \otimes_k A_s \otimes_{\O_X} A_t^{\vee}(-r) \to \cdots \to A_s \otimes_{\O_X} A_t^{\vee} \to \delta^*A_s \otimes_{\O_{X_L}} \delta^*A_t^{\vee} \to 0.
\]
Applying global sections yields an exact sequence of hypercohomology
\begin{equation} \label{eq: hypercohomology}
0 \to \wedge^r L \otimes_k \mathbf{R} \! \op{Hom}_{X}(A_t, A_s(-r)) \to \cdots \to \mathbf{R}\!\op{Hom}_{X}(A_t, A_s) \to \mathbf{R}\!\op{Hom}_{X_L}(\delta^*A_t, \delta^*A_s) \to 0.
\end{equation}
Now, by definition of a Lefschetz decomposition 
$
\mathbf{R}\!\op{Hom}_{X_L}(A_t, A_s(-p)) =0
$
if $p \leq s < t$.  Plugging into \eqref{eq: hypercohomology} we obtain
\[
\mathbf{R} \! \op{Hom}_{X_L}(\delta^*A_t, \delta^*A_s)  \cong 
\begin{cases}
\mathbf{R}\!\op{Hom}_X(A_t, A_s) & \tif r \leq s = t \\
0 & \tif  r \leq s < t,
\end{cases}
\]
which shows that $\delta^*$ is fully faithful on $\cA_t(t)$ for $t\geq r$ and that the images of these subcategories are semi-orthogonal.
\end{proof}
 
A Lefschetz decomposition also induces a semi-orthogonal decomposition on the universal hyperplane section $\cX$ with respect to $f$ and similarly on the family of hyperplane sections over any $L \subseteq V^*$, 
\[
\cX_L := \cX \times_{\P(V^*)} \P(L).
\]
Let $\pi_L$ denote the natural map from $\mathcal X_L$ to $\P(L)$ and define $\mathcal A_k(k)\boxtimes \dbcoh{\P(L})$ to be the full triangulated subcategory of $\dbcoh{X\times \P(L)}$ generated by objects $\mathcal F\boxtimes\mathcal G$, with $\mathcal F\in A_k(k)\subset\dbcoh{X}$ and $\mathcal G\in\dbcoh{\P(L)}$. 

\begin{proposition} \label{prop: universal breakdown}
For any  Lefschetz decomposition,
 \begin{displaymath}
  \dbcoh{X} = \langle \mathcal A_0, \mathcal A_1(1),\ldots\cA_i(i) \rangle,
 \end{displaymath}
 of $\dbcoh{X}$ there is an associated semi-orthogonal decomposition,
 \begin{equation} \label{eq: decomp}
  \dbcoh{\mathcal X_L}= \langle \mathcal D_L, \mathcal A_1(1)\boxtimes \dbcoh{\P(L)},\ldots,\cA_i(i)\boxtimes \dbcoh{\P(L)}\rangle
\end{equation}
 where $\mathcal D_L$ is defined as the right orthogonal to $ \langle \mathcal A_1(1)\boxtimes \dbcoh{\P(L)},\ldots,\cA_i(i)\boxtimes \dbcoh{\P(L)} \rangle$.
\end{proposition}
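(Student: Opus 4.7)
The plan is to mimic the argument of Proposition \ref{prop: X breakdown}, applied this time to the codimension-one regular embedding $\delta: \mathcal X_L \hookrightarrow X \times \P(L)$. By construction, $\mathcal X_L$ is cut out by the restriction of the canonical section $\theta_X$, viewed as a regular section of $\mathcal O_X(1) \boxtimes \mathcal O_{\P(L)}(1)$, so one has the two-term Koszul resolution
$$0 \to \mathcal O_X(-1) \boxtimes \mathcal O_{\P(L)}(-1) \to \mathcal O_{X \times \P(L)} \to \delta_*\mathcal O_{\mathcal X_L} \to 0.$$
For objects of the form $\mathcal F \boxtimes \mathcal G$ and $\mathcal F' \boxtimes \mathcal G'$, tensoring with $(\mathcal F^\vee \otimes \mathcal F') \boxtimes (\mathcal G^\vee \otimes \mathcal G')$, applying $\mathbf{R}\Gamma$, and invoking K\"unneth together with adjunction for $\delta$ produces a distinguished triangle
\begin{align*}
\mathbf{R}\!\op{Hom}_X(\mathcal F, \mathcal F'(-1)) \otimes \mathbf{R}\!\op{Hom}_{\P(L)}(\mathcal G, \mathcal G'(-1)) &\to \mathbf{R}\!\op{Hom}_X(\mathcal F, \mathcal F') \otimes \mathbf{R}\!\op{Hom}_{\P(L)}(\mathcal G, \mathcal G') \\
&\to \mathbf{R}\!\op{Hom}_{\mathcal X_L}\bigl(\delta^*(\mathcal F \boxtimes \mathcal G),\, \delta^*(\mathcal F' \boxtimes \mathcal G')\bigr).
\end{align*}

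I would then exploit this triangle in two ways. First, for $\mathcal F \in \mathcal A_s(s)$ and $\mathcal F' \in \mathcal A_t(t)$ with $1 \leq t < s \leq i$, the nesting $\mathcal A_t \subset \mathcal A_{t-1}$ places $\mathcal F'(-j)$ in $\mathcal A_{t-j}(t-j)$ for $j=0,1$, and semi-orthogonality of the original Lefschetz decomposition annihilates $\mathbf{R}\!\op{Hom}_X(\mathcal F, \mathcal F'(-j))$ in both cases. This establishes the desired semi-orthogonality between $\mathcal A_s(s) \boxtimes \dbcoh{\P(L)}$ and $\mathcal A_t(t) \boxtimes \dbcoh{\P(L)}$ on pure tensors, and extends to the generated triangulated subcategories by a standard d\'evissage. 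Second, for $\mathcal F, \mathcal F' \in \mathcal A_k(k)$, the first term of the triangle vanishes because $\mathcal F'(-1) \in \mathcal A_k(k-1) \subset \mathcal A_{k-1}(k-1)$ sits strictly to the left of $\mathcal A_k(k)$ in the Lefschetz ordering; the triangle collapses to
$$\mathbf{R}\!\op{Hom}_{\mathcal X_L}\bigl(\delta^*(\mathcal F \boxtimes \mathcal G),\, \delta^*(\mathcal F' \boxtimes \mathcal G')\bigr) \cong \mathbf{R}\!\op{Hom}_{X \times \P(L)}(\mathcal F \boxtimes \mathcal G, \mathcal F' \boxtimes \mathcal G'),$$
establishing full faithfulness of the restriction functor $\mathcal A_k(k) \boxtimes \dbcoh{\P(L)} \to \dbcoh{\mathcal X_L}$.

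To conclude, I would verify that each image $\mathcal A_k(k) \boxtimes \dbcoh{\P(L)} \subset \dbcoh{\mathcal X_L}$ is admissible: admissibility of $\mathcal A_k(k)$ in $\dbcoh{X}$ combined with the existence of adjoints $\delta_*$ and $\delta^!$ for the regular closed immersion $\delta$ supplies the required adjoints via the base-change formalism of \cite{KuzBaseChange}. Defining $\mathcal D_L$ as the right orthogonal then upgrades the resulting semi-orthogonal collection to a full semi-orthogonal decomposition by the standard completion lemma for admissible subcategories. The main obstacle lies not in the Hom-vanishings, which are routine Koszul--K\"unneth bookkeeping, but in cleanly extending the pure-tensor Hom computations to the full external-product subcategories and deducing admissibility along $\delta$; both points must be handled carefully through the relative formalism of Kuznetsov.
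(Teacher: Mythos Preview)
Your argument is correct and at its core uses the same Koszul mechanism as the paper, but the paper packages it much more efficiently. Rather than redoing the Hom computations by hand, the paper first observes that the Lefschetz decomposition on $X$ induces a semi-orthogonal decomposition
\[
\dbcoh{X \times \P(L)} = \langle \cA_0 \boxtimes \dbcoh{\P(L)}, \ldots, \cA_i(i) \boxtimes \dbcoh{\P(L)} \rangle,
\]
which is itself a Lefschetz decomposition with respect to the Segre map $X \times \P(L) \to \P(V \otimes L)$ (since the Segre twist $\mathcal O(1,1)$ absorbs into the $\dbcoh{\P(L)}$ factor). One then simply invokes Proposition~\ref{prop: X breakdown} with $r=1$, because $\mathcal X_L$ is a single hyperplane section under Segre. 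Your direct computation is precisely what that invocation unwinds to, so there is no genuine divergence in strategy; the paper's route just saves you the d\'evissage bookkeeping and the separate admissibility check, both of which are already absorbed into Proposition~\ref{prop: X breakdown}.
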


\begin{proof}
Notice that we get a semi-orthogonal decomposition
\[
\dbcoh{X \times \P(L)} = \langle \cA_0 \boxtimes \dbcoh{\P(L)}, \ldots, \cA_i(i) \boxtimes \dbcoh{\P(L)} \rangle.
\]
Now, consider $X \times \P(L)$ with the Segre embedding and apply Proposition~\ref{prop: X breakdown} to get the result. \end{proof}

The following is Definition 6.1 of \cite{KuzHPD}.
\begin{definition} \label{def: original HPD}
  Given $f: X \rightarrow \P(V)$ and a Lefschetz decomposition $\langle \cA_0 , \cA_1(1) ,\dots \cA_i(i)\rangle$ of $\dbcoh{X}$, a \newterm{Homological Projective Dual} $Y$ is an algebraic variety  together with a morphism $g: Y \rightarrow \P(V^*)$  and a fully-faithful Fourier-Mukai transform $\Phi_{\mathcal P}$ with kernel $\mathcal P \in \dbcoh{Y \times_{\P(V^*)}\cX}$ which induces a semi-orthogonal decomposition
   \[
     \dbcoh{\mathcal X}=\langle \Phi_{\mathcal P}(\dbcoh{Y}) ,\mathcal A_1(1)\boxtimes \dbcoh{\P(V^*)},\ldots,\cA_i(i)\boxtimes \dbcoh{\P(V^*)}\rangle.
  \]
\end{definition}

The Fundamental Theorem of HPD relates linear sections in $X$ with respect to $f$ to their dual linear sections of $Y$ with respect to $g$. Let $N$ be the dimension of $V$ and  $L\subset V^*$ be a linear subspace of dimension $r$.  Recall that
\[
X_L:=X\times_{\P(V)}\P(L^\perp)
\]
and define
\[
Y_L:=Y\times_{\P(V^*)}\P(L).
\]

\begin{theorem}[Fundamental Theorem of Homological Projective Duality] \label{thm: fundamental HPD}
 Let $Y\rightarrow \P(V^*)$ be a homological projective dual to $X\rightarrow \P(V)$  with respect to the Lefschetz decomposition $\left\{ \cA_i \right\}$ in the sense of Definition~\ref{def: original HPD}. With the notation above we have the following:
\begin{itemize}

\item The category $\dbcoh{Y}$ admits a dual Lefschetz decomposition  
 $$\dbcoh{Y}=\langle\mathcal B_j(-j),\ldots,\mathcal B_1(-1),\mathcal B_0\rangle$$.
\item Assume that $X_L$ and $Y_L$ are complete linear sections, i.e.
\[
\op{dim}(X_L) = \op{dim}(X)-r \tand \op{dim}(Y_L) = \op{dim}(Y)+r-N.
\]
Then there exist semi-orthogonal decompositions,
\[
\dbcoh{X_L}=\langle\mathcal C_L,\mathcal A_r(1),\ldots,\mathcal A_i(i-r+1)\rangle,
\]
 and,
 \[
 \dbcoh{Y_L}=\langle\mathcal B_j(N-r-j-1),\ldots,\mathcal B_{N-r}(-1),\mathcal C_L\rangle.
 \]
\end{itemize}
\end{theorem}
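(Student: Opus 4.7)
The plan is to follow Kuznetsov's original strategy from \cite{KuzHPD}, whose central device is $\dbcoh{\mathcal X}$ (together with its base changes $\dbcoh{\mathcal X_L}$), each carrying two different semi-orthogonal decompositions whose pieces can be matched. Comparing and mutating the common pieces extracts the dual Lefschetz decomposition as well as the decompositions of $\dbcoh{X_L}$ and $\dbcoh{Y_L}$.

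For the first bullet I would start from the HPD decomposition
$$\dbcoh{\mathcal X} = \langle \Phi_{\mathcal P}(\dbcoh{Y}), \cA_1(1)\boxtimes\dbcoh{\P(V^*)}, \ldots, \cA_i(i)\boxtimes\dbcoh{\P(V^*)}\rangle$$
and compare it with the same decomposition twisted by the autoequivalence $(-1) := \otimes q^*\O_{\P(V^*)}(-1)$. The category $\Phi_{\mathcal P}(\dbcoh{Y})(-1)$ must be expressible in the original decomposition; the projection onto the $\Phi_{\mathcal P}(\dbcoh{Y})$ block and onto the $\cA_k(k)\boxtimes\dbcoh{\P(V^*)}$ blocks cuts out an admissible subcategory $\cB_1 \subset \dbcoh{Y}$. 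Iterating this procedure (twisting by $(-s)$ and projecting) one obtains the chain $\cB_{j-1}\subset\cdots\subset\cB_1\subset\cB_0 = \dbcoh{Y}$ together with the desired dual Lefschetz decomposition $\dbcoh{Y} = \langle \cB_{j-1}(1-j),\ldots,\cB_1(-1),\cB_0\rangle$. The width $j$ is determined by how many twists are needed before the chain stabilizes and is controlled by $i$ and $N$.

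For the second bullet, the decomposition of $\dbcoh{X_L}$ is obtained by applying Proposition~\ref{prop: X breakdown} directly; the stated indexing $\cA_r(1),\ldots,\cA_i(i-r+1)$ differs from the one in Proposition~\ref{prop: X breakdown} only by a global twist by $\O_X(1-r)$, which is an autoequivalence compatible with the $\P(V)$-linear structure. For $\dbcoh{Y_L}$, compare two semi-orthogonal decompositions of $\dbcoh{\mathcal X_L}$. The first is from Proposition~\ref{prop: universal breakdown}:
$$\dbcoh{\mathcal X_L} = \langle \cD_L,\ \cA_1(1)\boxtimes\dbcoh{\P(L)},\ldots,\cA_i(i)\boxtimes\dbcoh{\P(L)}\rangle.$$
The second is obtained by base changing the HPD decomposition of $\dbcoh{\mathcal X}$ along $\P(L)\hookrightarrow\P(V^*)$ via the base-change theorem for admissible subcategories in a $\P(V^*)$-linear setting \cite{KuzBaseChange}:
$$\dbcoh{\mathcal X_L} = \langle \Phi_{\mathcal P_L}(\dbcoh{Y_L}),\ \cA_1(1)\boxtimes\dbcoh{\P(L)},\ldots,\cA_i(i)\boxtimes\dbcoh{\P(L)}\rangle.$$
Comparing the two yields $\cD_L \cong \dbcoh{Y_L}$. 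Now apply a dual version of Proposition~\ref{prop: X breakdown} to the pair $(g: Y\to\P(V^*), \{\cB_s\})$ and the complementary linear section $Y_L = Y\times_{\P(V^*)}\P(L)$, using the dimension condition $\op{dim}(Y_L) = \op{dim}(Y)+r-N$, to obtain
$$\dbcoh{Y_L} = \langle \cB_j(N-r-j-1),\ldots,\cB_{N-r}(-1),\cC'_L \rangle$$
for some admissible $\cC'_L\subset\dbcoh{Y_L}$. The final step is to identify $\cC'_L$ with the $\cC_L$ appearing on the $X_L$ side; this is done by realising both as the common orthogonal block appearing after all the $\cA_k$ and $\cB_s$ pieces inside $\dbcoh{\mathcal X_L}$, using the functoriality of the Fourier--Mukai kernel $\mathcal P$ under base change.

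The main obstacle is the mutation bookkeeping. In Part~1 one must verify that the chain $\cB_{j-1}\subset\cdots\subset\cB_0$ is strictly nested exactly as long as advertised and that each step is admissible; this is sensitive to the numerical invariants of the original Lefschetz decomposition. In the $Y_L$ step one must perform a coordinated sequence of mutations on $\dbcoh{\mathcal X_L}$ that simultaneously identifies the Lefschetz pieces coming from $X$ with the dual Lefschetz pieces coming from $Y$, and realises $\cC_L$ as the single shared admissible block on both ends. A secondary but essential point is the base-change compatibility of semi-orthogonal decompositions, which requires control over the flatness of the projection $\mathcal X_L\to\P(L)$ and the admissibility of the base-changed subcategories; this is precisely the content of~\cite{KuzBaseChange} and is the reason the theorem requires the completeness hypotheses on $X_L$ and $Y_L$.
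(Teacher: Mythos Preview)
The paper does not prove this theorem at all: its entire proof reads ``This is \cite[Theorem 6.3]{KuzHPD}.'' The result is imported as a black box from Kuznetsov's original paper, so there is no paper-internal argument to compare your proposal against.

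That said, your proposal is a reasonable high-level outline of Kuznetsov's actual proof in \cite{KuzHPD}. The strategy of comparing two semi-orthogonal decompositions of $\dbcoh{\mathcal X_L}$, invoking base change for $\P(V^*)$-linear categories, and extracting the dual Lefschetz pieces by iterated twisting and projection is indeed how the argument goes. You have also correctly identified the genuine difficulties: the mutation bookkeeping that produces the nested chain $\cB_{j-1}\subset\cdots\subset\cB_0$, and the identification $\cC'_L\cong\cC_L$. Be aware that in \cite{KuzHPD} the latter step is substantially more delicate than your sketch suggests; it occupies much of Sections~5 and~6 there and relies on Kuznetsov's theory of splitting functors and a careful analysis of how the kernel $\mathcal P$ interacts with the Lefschetz filtration, not merely on recognizing a common orthogonal block. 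Your outline would serve as a roadmap, but filling it in is the content of a long paper rather than a short argument.
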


\begin{proof}
 This is \cite[Theorem 6.3]{KuzHPD}.
\end{proof}

\begin{remark} \label{rem: HPD table}
Figure~\ref{fig: HPD Table} is a useful representation of the pieces appearing in the semi-orthogonal decompositions in the theorem above.   
The boxes themselves represent what Kuznetsov calls primitive subcategories $\mathfrak a_s := \cA_s / \cA_{s+1}$.
The longer vertical line is placed at $r$, the dimension of $L$.  The shaded boxes to the right of the long vertical line represent the terms of the perpendicular to $\mathcal C_L$ in $\dbcoh{X_L}$.
The shaded boxes to the left of the vertical line represent the terms of the perpendicular to $\mathcal C_L$ in the  derived category of the homological projective dual $Y_L$.  In the $i^{\op{th}}$ column, the category generated by the boxes below the staircase correspond to $\cA_{i-1}$ and the category generated by the boxes above the staircase give $\cB_{j-i+1}$.
\end{remark}

\newcommand*{\xMin}{1}
\newcommand*{\xMax}{12}
\newcommand*{\xShift}{5}
\newcommand*{\yMin}{0}
\newcommand*{\yMax}{6}
\newcommand*{\yMinshift}{1}
\newcommand*{\yMaxshift}{5}
\newcommand*{\yMaxx}{5}
\newcommand*{\yShift}{3}
\begin{center}
\begin{figure}
\tiny
\begin{tikzpicture}

\filldraw[fill=black!20!white,draw=white!100]
  (2,6) -- (4, 6)  -- (4,4) -- (3,4) -- (3,5) -- (2,5) -- (2,6);

\filldraw[fill=black!20!white,draw=white!100]
(4,3) -- (5,3) -- (5,2) -- (6,2) -- (6, 1) -- (7,1) -- (7,0) -- (4,0) -- (4,3);
    \foreach \i in {\xMin,...,\xMax} {
        \draw [thin,black] (\i,\yMin) -- (\i,\yMax)  node [below] at (\i,\yMin) {};
    }
    \foreach \i in {\yMinshift,...,\yMaxshift} {
        \draw [thin,black,dashed] (\xMin,\i) -- (\xMax,\i) node [left] at (\xMin,\i) {};
    }
        \node at (-.2, \yMin +.5) {$\mathfrak a_s$};
            \node at (-.2, \yMax -.5) {$\mathfrak a_0$};

    \foreach \i in {\yMin,\yMax} {
        \draw [thin,black] (\xMin,\i) -- (\xMax,\i) node [left] at (\xMin,\i) {};
    }
\node at (\xMin+.5, \yMin - .5) {$\cA_0$};
\node at (\xMin+.5 + \xShift, \yMin - .5) {$\cA_{i}$};

\node at (\xMax - .5, \yMax + .5) {$\cB_0$};
\node at (\xMin+1.5, \yMax +.5) {$\cB_{j}$};

\draw[<->, blue, very thick] (4, \yMin-.5) -- (4, \yMax+.5);

    \foreach \i in {\yMin,...,\yMaxx} {
        \draw [very thick, red] (\xMin+\i,\yMax- \i) -- (\xMin+\i +1,\yMax- \i); %node [left] at (\xMin,\i) {};
    }
        \foreach \i in {\yMin ,...,\yMaxx} {
        \draw [very thick, red] (\xMin+\i +1,\yMax- \i) -- (\xMin+\i +1,\yMax- \i -1); %node [left] at (\xMin,\i) {};
    }

\draw [very thick, red] (\xMin + \yMax, \yMin) -- (\xMax, \yMin);

\end{tikzpicture}
\normalsize
\captionof{figure}{Kuznetsov's image of Lefschetz collections and their duals} \label{fig: HPD Table}
\end{figure} 
\end{center}

\begin{remark}
Homological Projective Duality is a duality in the following sense. If $Y\ra\P(V^*)$ is a homological projective dual to $X\ra\P(V)$, then $Y\ra\P(V^*)$ has a dual Lefschetz decomposition $\dbcoh{Y}=\langle\mathcal B_j(-j),\ldots,\mathcal B_1(-1),\mathcal B_0\rangle$. By dualizing as in Theorem 7.3 in {\em loc. cit.}, we get a Lefschetz decomposition $\dbcoh{Y}=\langle\mathcal B_0^*,\ldots,\mathcal B_j^*(j)\rangle$ for $Y\ra \P(V^*)$. With respect to this Lefschetz decomposition, Kuznetsov shows that $X\ra\P(V)$ is a homological projective dual to $Y\ra\P(V^*)$.
\end{remark}

Let $X$ be an $S$-scheme and $\cE$ be a locally-free coherent sheaf over $S$. Let $f:X\ra \P_{S}(\cE)$ be an $S$-morphism. We now consider Homological Projective Duality in the relative setting. This was already studied by Kuznetsov when $\cE$ is the trivial bundle \cite[Theorem 6.27]{KuzHPD} and, in the case of relative $2$-Veronese embeddings, by Auel, Bernardara, and Bolognesi \cite[Theorem 1.13]{ABB}.

The definition of a Lefschetz decomposition extends to the relative setting by replacing the projective space $\P(V)$ by the projectivization $\P_S(\cE)$ and $\cO_{\P(V)}(1)$ by $\cO_{\P_S(\cE)}(1)$. In the relative setting, we define the universal hyperplane section as $\cX = X \times_{\P_S(\cE)} \cX_0 $ where $\cX_0$ is the incidence variety in $\P_S(\cE)\times_S \P_S(\cE^*)$.

\begin{definition}\label{defofhpd}
 Given an $S$-morphism $f: X \rightarrow \P_S(\cE)$ and a Lefschetz decomposition $\langle \cA_0 , \cA_1(1) ,\dots \cA_i(i)\rangle$, a \newterm{weak Homological Projective Dual} $Y$ relative to $S$ is either
 \begin{itemize}
  \item an $S$-scheme $Y$ together with a morphism $Y \rightarrow \P_S(\cE^*)$, or
  \item a  gauged Landau-Ginzburg model $(Q, G, \mathcal L, w)$ (Definition \ref{deflgmodel}) together with an $S$-morphism $g: Q \to \P_S(\cE^*)$,
 \end{itemize}
 such that there is a  semi-orthogonal decomposition
   \[
     \dbcoh{\mathcal X}=\langle \Phi ,\mathcal A_1(1)\boxtimes \dbcoh{\P_S(\cE^*)},\ldots,\cA_i(i)\boxtimes \dbcoh{\P_S(\cE^*)}\rangle,
  \]
  where $\Phi$ denotes the essential image of a fully-faithful $\P_S(\cE)$-linear functor
  \begin{displaymath}
    \Phi: \dbcoh{Y} \to \dbcoh{\mathcal X}
  \end{displaymath}
  or 
  \begin{displaymath}
   \Phi: \dcoh{[Q/G],w} \to \dbcoh{\mathcal X},
  \end{displaymath}
\end{definition}
with the $\P_S(\cE)$-linear structure given by tensoring with pullbacks of objects in $\dcoh{\P_S(\cE)}$.

\begin{remark}
  The difference, between Kuznetsov's definition of Homological Projective Dual and the above definition of a weak Homological Projective Dual is in the assumption that the functor $\Phi$ is given by a Fourier-Mukai kernel in the fiber product $\dbcoh{Y\times_{\P(V^*)}\cX}$. Recent work by Ben-Zvi, Nadler and Preygel \cite{BZNP} shows that a Fourier-Mukai kernel in $\dbcoh{Y\times_{\P(V^*)}\cX}$ for $\Phi$ does exist when $Y$ is a scheme and $\P(V^*)$-linearity is interpreted in a stronger, $\infty$-categorical sense.  
Because of this difference, we prove the conclusions of the Fundamental Theorem of HPD separately in the setup that we consider in this paper (Theorem \ref{thm: MainHPDMF}).

\end{remark}

\begin{remark}
  In the relative setting, we will consider, instead of linear sections $X_L$ and $Y_L$, the fiber products $X \times_{\P_S(\cE)} \P_S(\cW)$ and    $Y \times_{\P_S(\cE^*)} \P_S(\cV)$ where $\cV = \cE^*/\cU$ is a quotient bundle and $\cW = \cE/\cU^\perp$. For a Landau-Ginzburg pair $(Q,G,\cL,w)$, we define the fiber product as $(Q,G,\cL,w)\times_{\P_S(\cE^*)} \P_S(\cV):=(Q\times_{\P_S(\cE^*)} \P_S(\cV),G,\cL_{|\P_S(\cV)},w_{|\P_S(\cV)})$.

\end{remark}

\section{Homological Projective Duality and VGIT}\label{sec:lghpd}
In this section we construct a weak homological projective dual to a GIT quotient provided we are also given the data of an elementary wall-crossing.  

The idea behind this section is to start with a variety $X$ given as a quotient $X = [Q^{\op{ss}}(\cM) / G] = [Q_+ / G] $, where $Q$ is a smooth variety with an action of $G$ and $\cM$ is a linearization of the $G$-action, and to then prove that, under basic assumptions, an elementary wall crossing which varies the GIT quotient induces a Lefschetz decomposition of $\dbcoh{X}$, with respect to the morphism $X\to \P_S(\cE)$ induced by the bundle $\cM$. Moreover, the same data can be used to construct a weak homological projective dual to $X$ which is a Landau-Ginzburg pair $(Y,w)$, where $Y$ is a GIT quotient of the space 
$\op{V}_Q(\cM) \times_S \op{V}_S(\cE^*)$, and $w$ is induced by the canonical section of $\mathcal O_{\P_S(\cE)\times_S\P_S(\cE^*)}(1,1)$. 

We follow the notation of Section \ref{vgit}.

\subsection{Lefschetz decompositions and HPD from elementary wall crossings}
Let $Q$ be a smooth quasi-projective variety equipped with the action of a reductive linear algebraic group $G$ and a morphism
\[
p: [Q/G] \to S.
\]

Let $\lambda$ be a one-parameter subgroup of $G$ which determines an elementary wall-crossing $(\mathfrak{K^+}, \mathfrak{K^-})$
 \begin{align*}
  Q & = Q_+ \sqcup S_{\lambda} \\
  Q & = Q_- \sqcup S_{-\lambda},
 \end{align*}
such that $S_{\lambda}^0=G\cdot Z_\lambda^0$ admits a $G$-equivariant affine cover and $S_\lambda$ has codimension at least $2$.   We let
\[
 \mu = - t(\mathfrak{K^+})+ t(\mathfrak{K^-})
\]
and  we assume that $\mu\geq 0$.

Assume that $G$ acts freely on $Q_+$ and that $X:=[Q_+/G]$ is a smooth and proper variety.  Notice that $X$ is an $S$-scheme by composing the inclusion with $p$.  We denote this map by
\[
g: X \to S.
\] 

Let $\mathcal E$ be a locally-free coherent sheaf of rank $N$ over $S$.  One can consider the projective bundle
\[
\P_S(\mathcal E) := [ ( \op{V}_S(\mathcal E) \backslash \mathbf{0}_{\op{V}_S(\mathcal E)}) / \gm]
\]
where $ \mathbf{0}_{\op{V}_S(\mathcal E)}$ denotes the zero-section of $\op{V}_S(\mathcal E)$.  This bundle comes with a projection 
\[
\pi : \P_S(\mathcal E) \to S.
\]
We denote the relative bundle $\cO_\pi(1)$ by $\O_{\P_S(\cE)}(1)$. Note that with our notation, $\pi_*\O_{\P_S(\cE)}(1)\cong\cE$.

Consider an $S$- morphism
\[
f: X \to \P_S(\mathcal E).
\]
We write 
\[
\cL := f^*\O_{\P_S(\mathcal E)} (1).
\]
Now suppose that there exists a $G$-equivariant invertible sheaf $\cM = \cO(\chi)$ on $Q$, for some character $\chi$ of $G$ such that, as an invertible sheaf on $[Q/G]$, it restricts to $\cL$ on $[Q_+/G]$
\[
\cM|_{[Q_+/G]} \cong \cL.
\]
Furthermore, let $d$ be the $\lambda$-weight of $\cM$. Recall that in this case, the $\lambda$-weight of $\cM = \cO(\chi)$ is the integer $d$, such that $\chi\circ\lambda(t)=t^d$. We assume that $d>0$.

Recall that we have fully-faithful functors
\begin{displaymath}
  \Upsilon_j^+: \dbcoh{[Z_{\lambda}^0/C(\lambda)]}_j \to \dbcoh{X}
\end{displaymath}
from by applying Theorem \ref{thm: bfkvgitlg} with $w=0$, and $\gm$ acting trivially, and using Proposition \ref{cor: isik}. Therefore, when writing semi-orthogonal decompositions, we will denote the essential images of the functors $\Upsilon_j^+$ by $\cZ_j^+$. By Lemma \ref{lemma: wall compositions are same}, we see that 
$$\dbcoh{[Z_{\lambda}^0/C(\lambda)]}_0\cong\dcoh{Y_{\lambda}},$$
where $Y_{\lambda} : = [ Z_{\lambda}^0 / (C(\lambda)/\lambda) ]$ and twisting by $\chi_{|C(\lambda)}$, which by definition is tensoring with the restriction of $\cL$, induces an isomorphism between $\cZ_n^+$ and that of $\cZ_{n+d}^+$ for any $n \in \Z$.

When $\mu \geq 0$, the elementary wall crossing induces a semi-orthogonal decomposition on $\dbcoh{X}$, which is a Lefschetz decomposition when $X$ is considered together with the map $f$ to $\P_S(\cE)$. The fineness of the Lefschetz decomposition depends on the $\lambda$-weight $d$ of $\cM$.

\begin{proposition} \label{prop: Lef dec from VGIT}
 If $\mu \geq 0$, there is a Lefschetz decomposition 
 $$\dbcoh{X}=\langle\cA_0,\ldots,\cA_i(i)\rangle$$of $X$ with respect to $f$, where $i=\lceil\frac{\mu}{d} \rceil -1$ and
\[
  \cA_j = \begin{cases}
  \langle \dbcoh{[Q_- / G]}, \cZ_0^+, \ldots, \cZ_{d-1}^+ \rangle & j=0 \\
  \langle \cZ_0^+, \ldots, \cZ_{d-1}^+ \rangle & 0 < j < \lceil\frac{\mu}{d} \rceil -1  \\
  \langle \cZ_0^+, \dots,\cZ_{\mu-d(\lceil\frac{\mu}{d}\rceil-1)}^+ \rangle & j = \lceil\frac{\mu}{d} \rceil -1 \\
    \end{cases}
  \]
\end{proposition}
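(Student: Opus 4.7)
The plan is to apply Theorem \ref{thm: bfkvgitlg} directly and then repackage the resulting VGIT semi-orthogonal decomposition into Lefschetz form, using the twist equivalence of Lemma \ref{lemma: wall compositions are same} to identify consecutive blocks of $\cZ_j^+$ with $(1)$-twists of one another. Once the pieces are grouped into blocks of length $d$, the Lefschetz shape falls out; the role of the sheaf $\cM$ having $\lambda$-weight $d$ is precisely to ensure that the twist $(1)$, given by tensoring with $\cL$, shifts the weight index by exactly $d$.

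First I would apply Theorem \ref{thm: bfkvgitlg} with $w = 0$, combined with Proposition \ref{cor: isik} to replace factorization categories by ordinary bounded derived categories of coherent sheaves. In the case $\mu > 0$, taking $D = t(\mathfrak{K}^-)$ normalizes the weight range to $\{0, 1, \ldots, \mu - 1\}$ and produces
\[
\dbcoh{X} = \langle \cZ_0^+, \cZ_1^+, \ldots, \cZ_{\mu-1}^+, \dbcoh{[Q_-/G]} \rangle.
\]
The case $\mu = 0$ reduces to case (2) of the theorem and yields the trivial Lefschetz decomposition $\cA_0 = \dbcoh{X}$. Lemma \ref{lemma: wall compositions are same}, applied to the character $\chi$ defining $\cM$ and satisfying $\chi \circ \lambda(t) = t^d$, then supplies equivalences $\cZ_j^+ \cong \cZ_{j+d}^+$ realized by tensoring with $\cL$, i.e.\ by the functor $(1)$.

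Next, I would partition the indices $\{0, 1, \ldots, \mu - 1\}$ into $\lceil \mu/d \rceil$ consecutive blocks of length at most $d$. Setting $\cA_1 := \langle \cZ_0^+, \ldots, \cZ_{d-1}^+ \rangle$, I would put $\cA_j := \cA_1$ for $0 < j < i = \lceil \mu/d \rceil - 1$, let $\cA_i$ be the appropriate truncation of $\cA_1$ matching the (possibly shorter) last block, and adjoin $\dbcoh{[Q_-/G]}$ to $\cA_0$. The twist equivalence of the previous step sends $\cA_j$ bijectively onto $\langle \cZ_{jd}^+, \ldots, \cZ_{(j+1)d-1}^+ \rangle$, so the twisted pieces $\cA_j(j)$ together with $\dbcoh{[Q_-/G]}$ exhaust precisely the constituents of the VGIT decomposition. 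Nesting $\cA_i \subset \cdots \subset \cA_0$ is immediate from the definitions; admissibility of each $\cA_j$ follows because each of its generating pieces is admissible as an SOD piece; and semi-orthogonality of $\langle \cA_0, \cA_1(1), \ldots, \cA_i(i) \rangle$ is inherited from the VGIT decomposition of the first step.

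The principal obstacle is of a bookkeeping nature: reconciling the placement of $\dbcoh{[Q_-/G]}$ at one end of the VGIT decomposition with its role inside $\cA_0$, and tracking the weight shifts under the repeated twists by $\cL$. Concretely, after grouping one must verify that no semi-orthogonality relation is violated between $\dbcoh{[Q_-/G]}$ and the twisted blocks $\cA_j(j)$ for $j \geq 1$; this is where one may need to perform a standard mutation bringing $\dbcoh{[Q_-/G]}$ to the correct end of the decomposition before applying the grouping. Once these orderings are fixed, the claimed Lefschetz structure of $\dbcoh{X}$ relative to $f$ follows by direct reassembly of the pieces produced by Theorem \ref{thm: bfkvgitlg}.
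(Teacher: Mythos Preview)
Your approach is essentially the same as the paper's: apply Theorem~\ref{thm: bfkvgitlg} with $D=t(\mathfrak{K}^-)$ (and Proposition~\ref{cor: isik}) to obtain $\dbcoh{X}=\langle \cZ_0^+,\ldots,\cZ_{\mu-1}^+,\dbcoh{[Q_-/G]}\rangle$, mutate $\dbcoh{[Q_-/G]}$ to the far left, and then use the $\cL$-twist identification $\cZ_n^+\cong\cZ_{n+d}^+$ from Lemma~\ref{lemma: wall compositions are same} to group into blocks of length $d$.

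The one point you should tighten is the justification of admissibility and hence of the mutation. The decomposition coming out of Theorem~\ref{thm: bfkvgitlg} is a priori only a \emph{weak} semi-orthogonal decomposition, so your sentence ``admissibility of each $\cA_j$ follows because each of its generating pieces is admissible as an SOD piece'' is circular. The paper closes this gap by invoking that $X$ is smooth and proper, so $\dbcoh{X}$ is saturated \cite[Corollary~3.1.5]{BvdB}, whence every component of a weak semi-orthogonal decomposition is automatically admissible \cite[Proposition~2.8]{BK}; this is what licenses the ``standard mutation'' you allude to. Once you insert this, your argument and the paper's coincide.
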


\begin{proof}
 Taking $D=t(\mathfrak{K}^-)$ in Theorem \ref{thm: bfkvgitlg}, in combination with Proposition \ref{cor: isik}, gives a fully-faithful functor, $\Phi^+_{-t(\mathfrak{K}^-)}: \dbcoh{[Q_- / G]} \to \dbcoh{X}$, and a weak semi-orthogonal decomposition
 \begin{displaymath}
  \dbcoh{X} = \langle  \cZ^+_{0}, \ldots, \cZ^+_{\mu-1}, \cD  \rangle,
 \end{displaymath}
 where $\cD$ represents the essential image of the functor $\Phi^+_{t(\mathfrak{K}^-)}$.

 Since $X$ is smooth and proper, $\dbcoh{X}$ is saturated \cite[Corollary 3.1.5]{BvdB} and so is any weak semi-orthogonal component. By \cite[Proposition 2.8]{BK}, all the subcategories are fully admissible and we can mutate to get a new semi-orthogonal decomposition
 \begin{displaymath}
  \dbcoh{X} = \langle\cD, \cZ^+_{0}, \ldots, \cZ^+_{\mu-1}  \rangle.
 \end{displaymath}
 We conclude the proof by noticing, as above, that tensoring by $\cL$ induces an isomorphism between $\cZ_n^+$ and that of $\cZ_{n+d}^+$ for any $n \in \Z$.
\end{proof}

Recall that $\cX_0$ be the incidence scheme in $\P_S(\cE)\times_S \P_S(\cE^*)$ and that
\begin{displaymath}
 \cX = X \times_{\P_S(\cE)} \cX_0
\end{displaymath}
is the relative universal hyperplane section of the $S$-morphism $f: X \to \P_S(\cE)$.

We will now set up an elementary wall crossing for an action of $\widetilde{G} = G\times \GG_m \times \GG_m$ on a space $U_{\cE^*}^1$ with a potential function $w$ such that
\[
\dbcoh{\cX} \cong \dcoh{[(U_{\cE^*}^1)_+/ \widetilde{G}], w}.
\]
The gauged Landau-Ginzburg model corresponding to the quotient $[(U_{\cE^*}^1)_-/ \widetilde{G}]$ obtained from the elementary wall crossing will be our weak homological projective dual. 

Let us define
\begin{equation*}
  U_{\cE^*}^1 = \op{V}_Q(\cM) \times_S (\op{V}_S(\cE^*)\backslash \mathbf{0}_{\op{V}_S(\cE^*)} )
\end{equation*}
with an action of $\widetilde{G} = G\times \GG_m \times \GG_m$ which can be described by 
\begin{center}
\begin{tabular}{c c c}
   $\,$  & $\op{V}_Q(\cM)$ & $(\op{V}_S(\cE^*)\backslash \mathbf{0}_{\op{V}_S(\cE^*)} )$  \\
   $G$ &   $g$    &  $1$ \\ 
   $\GG_m$  & $\alpha_1^{-1}$  & $\alpha_1$ \\
   $\GG_m$  & $\alpha_2$ &  $1$
\end{tabular}
\end{center}
Here, $\alpha_1\in\GG_m$ and $\alpha_2\in\GG_m$ act by dilation on the fibers of the two respective bundles and the action of $G$ on $\op{V}_Q(\cM)$ is induced by the equivariant structure of $\cM$.

Let $\lambda_1$ be the one-parameter subgroup given by $\lambda_1(\alpha) = (\lambda(\alpha),1,1)$. The contracting locus for $\lambda_1$ is $$S_{\lambda_1} = \op{V}_{S_\lambda}(\cM|_{{S}_{\lambda}}) \times_S (\op{V}_S(\cE^*) \backslash \mathbf{0}_{\op{V}_S(\cE^*)}),$$ while the contracting locus for $-\lambda_1$ is
$$
  S_{-\lambda_1} = \mathbf{0}_{\op{V}_{S_{-\lambda}}(\cM)}\times_S (\op{V}_S(\cE^*)\backslash \mathbf{0}_{\op{V}_S(\cE^*)} ).
$$
Therefore, \begin{eqnarray*}
  (U_{\cE^*}^1)_+ = \op{V}_{Q_+}(\cM) \times_S (\op{V}_S(\cE^*)\backslash \mathbf{0}_{\op{V}_S(\cE^*)} )   \subset U_{\cE^*}^1.
\end{eqnarray*} and, by definition,
\begin{eqnarray*}
  (U_{\cE^*}^1)_- = U_{\cE^*}^1 \backslash S_{-\lambda_1}.
\end{eqnarray*} We will prove further below that $\lambda_1$ determines an elementary wall-crossing. 

We now show that the pull-back of the natural pairing $$\theta_{\cE} \in \Gamma( \P_S(\cE) \times_S \P_S(\cE^*) , \mathcal O_{\P_S(\cE)\times_S\P_S(\cE^*)}(1,1))$$ to $X \times_S \P(\cE^*)$, i.e. the section whose zero-scheme is $\cX$, induces a $G\times\GG_m$-invariant function $w$ on $U_{\cE^*}^1$, where $G\times \gm$ is $G\times\gm \times \left\{ 1 \right\} \subset \widetilde{G}$. 
Indeed, we have isomorphisms
\begin{eqnarray*}
  \Gamma(X\times _S \P_S(\cE^*), \op{Sym} (\cL \boxtimes \cO_{\P_S(\cE^*)}(1))) & \iso & \Gamma(\op{V}_{X\times_S \P_S(\cE^*)}(\cL \boxtimes \cO_{\P_S(\cE^*)}(1)),\cO) \\
  & \iso &  \Gamma( [\op{V}_{Q_{+}}(\cM) \times_{S} (\op{V}_{S}(\cE^*) \backslash \mathbf{0}_{\op{V}_{S}(\cE^{*})}) / (G\times \gm) ],\cO) \\
  & = &  \Gamma([(U_{\cE^*}^1)_+/(G\times \GG_m)],\cO) \\
  & \iso & \Gamma([U_{\cE^*}^1/(G\times \GG_m)],\cO) 
\end{eqnarray*}
where the first line comes from the fact that $\mathcal O_{\P_S(\cE)\times_S\P_S(\cE^*)}(1,1)|_{X \times_S \P_S(\cE^*)} = \mathcal L \boxtimes \mathcal O_{\P_S(\cE^*)}(1)$, and the last line comes from our assumption that $S_\lambda$ had codimension at least two in $Q$, which implies that $S_{\lambda_1}$ has codimension at least two in $U_{\cE^*}^1$.
Furthermore, $w$ is homogeneous of degree $1$ with respect to the final $\mathbb{G}_m$ component, i.e. it is semi-invariant with respect to the action of all of $\widetilde{G} = G\times \GG_m \times \GG_m$, with character $\beta(g,\alpha_1,\alpha_2) = \alpha_2$ of $\widetilde{G}$.  

We are now ready to state:

\begin{theorem}\label{thm: HPDMF}
 Let $Q$ be a smooth quasi-projective variety equipped with the action of a reductive algebraic group $G$ and a morphism $p: [Q/G] \to S$. Let $\lambda$ be a one-parameter subgroup of $G$ which determines an elementary wall-crossing $(\mathfrak{K^+}, \mathfrak{K^-})$ such that $S_{\lambda}^0$ admits a $G$-equivariant affine cover and $S_{\lambda}$ has codimension at least $2$ in $Q$. Assume that $X = [Q_+/G]$ is a smooth and proper variety and let $f: X \to \P_S(\cE)$ be an $S$-morphism such that there exists a $G$-equivariant invertible sheaf $\cM$ on $Q$ with $\lambda$-weight $d$ and $\cM|_{[Q_+/G]} \cong f^*\O_{\P_S(\cE)}(1)$.

 If $d \leq \mu$, the gauged Landau-Ginzburg model $( (U_{\cE^*}^1)_-, \widetilde{G}, \O(\beta), w)$ is a weak homological projective dual of $f$ with respect to the Lefschetz decomposition given by
 \[
  \cA_i = \begin{cases}
  \langle \dbcoh{[Q_- / G]}, \cZ_0^+, \ldots, \cZ_{d-1}^+ \rangle & i=0 \\
  \langle \cZ^+_0, \ldots, \cZ^+_{d-1} \rangle & 0 < i < \lceil\frac{\mu}{d} \rceil -1  \\
  \langle \cZ^+_0, \dots,\cZ^+_{\mu-d(\lceil\frac{\mu}{d}\rceil-1)} \rangle & i = \lceil\frac{\mu}{d} \rceil -1 \\
    \end{cases}
 \]
 In particular, there is a semi-orthogonal decomposition
 \begin{eqnarray*}
 \dbcoh{\mathcal X} = 
 \langle  \dcoh{[(U_{\cE^*}^1)_-/\widetilde{G}], w}, 
 \cZ^+_d \boxtimes \dbcoh{\P_S(\cE^*)}, \ldots \\ \ldots, \cZ^+_{\mu-1} \boxtimes \dbcoh{\P_S(\cE^*)} \rangle,
 \end{eqnarray*} where $\cZ^+_{k}$ are each equivalent to $\dbcoh{[Z_{\lambda}^0/C(\lambda)]}_k$.
\end{theorem}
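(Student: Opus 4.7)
The plan is to combine the $\sigma$-LG correspondence (Theorem~\ref{thm: isik}) with the VGIT decomposition (Theorem~\ref{thm: bfkvgitlg}) applied to $U_{\cE^*}^1$ and the one-parameter subgroup $\lambda_1$. First, $\cX \hookrightarrow X \times_S \P_S(\cE^*)$ is cut out by the regular section $\theta$ of the locally-free sheaf $\cL \boxtimes \O_{\P_S(\cE^*)}(1)$, whose total space over $X \times_S \P_S(\cE^*)$ is the quotient of $(U_{\cE^*}^1)_+$ by $G$ together with the $\alpha_1$-factor. Applying Theorem~\ref{thm: isik} with the $\alpha_2$-factor playing the role of the dilation $\gm$ therefore yields
\begin{equation*}
\dbcoh{\cX} \cong \dcoh{[(U_{\cE^*}^1)_+/\widetilde{G}], w},
\end{equation*}
with $w$ and the line bundle $\O(\beta)$ as already identified in the discussion preceding the statement.

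Next, I verify that $\lambda_1$ defines an elementary wall-crossing on $U_{\cE^*}^1$ satisfying the hypotheses of Theorem~\ref{thm: bfkvgitlg}. The strata $Z_{\lambda_1}^0$, $S_{\lambda_1}$, $S_{-\lambda_1}$ have been made explicit; the equivariant affine cover hypothesis and the codimension-$\geq 2$ hypothesis are inherited from those of $S_\lambda^0$ in $Q$, while $\O(\beta)$ has $\lambda_1$-weight zero because $\lambda_1$ is trivial in the $\alpha_2$-direction. Direct inspection of normal bundles gives $t(\mathfrak{K}_1^+) = t(\mathfrak{K}^+)$, because the additional $\cM$-fiber and $\P_S(\cE^*)$-directions lie inside $S_{\lambda_1}$ and do not contribute to the conormal, whereas $S_{-\lambda_1}$ is a zero section whose conormal carries an extra $\cM$-factor of $-\lambda$-weight $-d$. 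Hence $t(\mathfrak{K}_1^-) = t(\mathfrak{K}^-) - d$ and $\mu_1 = \mu - d \geq 0$.

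Choosing $D := d + t(\mathfrak{K}_1^-)$ in Theorem~\ref{thm: bfkvgitlg}(1) produces
\begin{equation*}
\dcoh{[(U_{\cE^*}^1)_+/\widetilde{G}], w} = \left\langle \Upsilon^+_d, \dots, \Upsilon^+_{\mu-1}, \Phi^+_D \right\rangle,
\end{equation*}
in which $\Phi^+_D$ embeds $\dcoh{[(U_{\cE^*}^1)_-/\widetilde{G}], w}$. Left-mutating this last piece through the walls rearranges the decomposition as $\langle \Phi', \Upsilon^+_d, \dots, \Upsilon^+_{\mu-1} \rangle$, leaving the wall pieces unchanged, and, via the Isik equivalence, gives the decomposition of $\dbcoh{\cX}$ required by Definition~\ref{defofhpd}. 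It remains to identify the wall pieces with $\cZ^+_j \boxtimes \dbcoh{\P_S(\cE^*)}$. Since $\cM$ vanishes on $Z^0_{\lambda_1}$, one has $w_{\lambda_1} = 0$; the $\alpha_2$-factor of $C(\lambda_1)$ acts trivially there, so Proposition~\ref{cor: isik} collapses it; the $\alpha_1$-factor turns the slice to $\P_S(\cE^*)$; and the remaining $C(\lambda)$-action on $Z^0_\lambda$ is the one defining $\cZ^+_j$ in Proposition~\ref{prop: Lef dec from VGIT}. As $\lambda_1 = (\lambda,1,1)$ acts only through the $C(\lambda)$-factor and trivially on $\P_S(\cE^*)$, the $\lambda_1$-weight $j$ subcategory is $\dbcoh{[Z^0_\lambda/C(\lambda)]}_j \boxtimes \dbcoh{\P_S(\cE^*)}\cong \cZ^+_j \boxtimes \dbcoh{\P_S(\cE^*)}$. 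The main obstacle is this final identification: one must track the equivariant structures and Fourier--Mukai kernels through Isik's equivalence and the VGIT decomposition to ensure the essential image of each $\Upsilon^+_j$ coincides with the stated external-product subcategory of $\dbcoh{\cX}$.
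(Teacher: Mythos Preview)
Your approach is essentially identical to the paper's: apply Theorem~\ref{thm: isik} to identify $\dbcoh{\cX}$ with $\dcoh{[(U_{\cE^*}^1)_+/\widetilde{G}],w}$, verify the $\lambda_1$ wall-crossing (which the paper packages separately as Lemma~\ref{lem: new crossing}), apply Theorem~\ref{thm: bfkvgitlg}, mutate using saturatedness of $\dbcoh{\cX}$, and identify the wall categories with $\cZ^+_j\boxtimes\dbcoh{\P_S(\cE^*)}$ via the isomorphism $[Z^0_{\lambda_1}/C(\lambda_1)]\cong [Z^0_\lambda/C(\lambda)\times\gm]\times_S\P_S(\cE^*)$. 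The only points you leave implicit that the paper makes explicit are the citation of \cite{BvdB} and \cite{BK} to justify admissibility for the mutation, and the remark that the resulting functor is $\P_S(\cE^*)$-linear; your self-flagged concern about tracking the essential images through Isik's equivalence is one the paper also does not spell out beyond the identification of the fixed-locus quotient.
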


Further below we will give a proof of this theorem, based on Theorem \ref{thm: bfkvgitlg} applied directly to the elementary wall-crossing given by $\lambda_1$ and Theorem \ref{thm: isik}, 
but we first state the other main result of this section.

%For $\cV$ a quotient bundle of $\cE^*$, define
%\begin{equation*}
%  U_{\cV}^1 := U_{\cE^*}^1 \times_{\op{V}_S(\cE^*)} V_S(\cV) = \op{V}_Q(\cM) \times_S (\op{V}_S(\cV)\backslash \mathbf{0}_{\op{V}_S(\cV)} )
%\end{equation*}
%The action of $\widetilde{G}$ and $\lambda_1$ restrict to $U_{\cV}^1$ to give an elementary wall-crossing and these structures are compatible with taking fibers. In particular   \begin{equation*}
%  (U_{\cV}^1)_- =   
%  ((U_{\cE^*}^1)_-)\times_{\op{V}_S(\cE^*)} V_S(\cV).
%\end{equation*}

Assume that $Q_- = \emptyset$, then we have, by Remark \ref{rem:simplification} below, that the Landau-Ginzburg model $$( (U_{\cE^*}^1)_-, \widetilde{G}, \O(\beta), w)$$ simplifies to the Landau-Ginzburg model
\begin{equation*}
(Q\times_S \P_S(\cE^*),G,\cM\boxtimes \cO_{\P_S(\cE^*)}(1),w).
\end{equation*}
In this case, the following theorem shows that the semi-orthogonal decompositions in the statement of Kuznetsov's Fundamental Theorem of Homological Projective Duality hold.

\begin{theorem} \label{thm: MainHPDMF}
  With the same assumptions as in Theorem \ref{thm: HPDMF} above, and assuming further that $Q_- = \emptyset$, we have the following  
\begin{itemize}

\item The derived category of the gauged Landau-Ginzburg model $(Q\times_S \P_S(\cE^*),G,\cM\boxtimes \cO_{\P_S(\cE^*)}(1),w)$  admits a dual Lefschetz collection
  \[ \dcoh{[Q\times_S \P_{S}(\cE^*)/G], w)}=\langle\mathcal B_{N -1}(-N +1),\ldots,\mathcal B_1(-1),\mathcal B_0\rangle, \]
  where $N$ is the rank of $\cE$, and with
  \[
  \cB_i = \begin{cases}
  \langle \cZ_0^+, \ldots, \cZ_{d-1}^+ \rangle & 0 \leq i \leq N - \lceil\frac{\mu}{d} \rceil -1 \\
  \langle  \cZ^+_{\mu+1-d(\lceil\frac{\mu}{d}\rceil-1)}, \ldots, \cZ^+_{d-1} \rangle & i = N - \lceil\frac{\mu}{d} \rceil\\
  0 & N - \lceil\frac{\mu}{d} \rceil < i < N \\
    \end{cases}
 \]

 \item Let $\cV = \cE^*/\cU $ be a quotient bundle of $\cE^*$ and $\cW = \cE/\cU^\perp$ be the corresponding quotient bundle of $\cE$. Assume that $X \times_{\P_S(\cE)} \P_S(\cW)$ is a complete linear section, i.e.
\[
\op{dim}(X \times_{\P_S(\cE)} \P_S(\cW)) = \op{dim}(X)-r,
\]
where $r$ is the rank of $\cU$.
Then,
\begin{itemize}
  \item[$\cbullet$]If $r < \lceil \frac{\mu}{d} \rceil $, there is a semi-orthogonal decomposition
    \begin{eqnarray*}
    \dbcoh{X \times_{\P_S(\cE)} \P_S(\cW)}=\langle\dcoh{[Q\times_S \P_{S}(\cV)/G], w)},\mathcal A_r(1),\ldots \\ \ldots,\mathcal A_i(i-r+1)\rangle,
    \end{eqnarray*}
\item[$\cbullet$] If  $r \geq \lceil \frac{\mu}{d} \rceil $, there is a semi-orthogonal decomposition
    \begin{eqnarray*}
      \dcoh{[Q\times_S \P_{S}(\cV)/G], w)}=\langle\mathcal B_{N-1}(-r-N -2),\ldots\\ \ldots,\mathcal B_{N -1-r}(-1),\dbcoh{X \times_{\P_S(\cE)} \P_S(\cW)}\rangle.
    \end{eqnarray*}

\end{itemize}
\end{itemize}
 
\end{theorem}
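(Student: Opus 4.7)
The overall strategy is to leverage the VGIT-HPD framework of Theorems \ref{thm: bfkvgitlg} and \ref{thm: HPDMF} twice: first, to produce the dual Lefschetz decomposition on the homological projective dual side via a mutation argument inside $\dbcoh{\cX}$, and second, to restrict the entire setup along a linear subspace $\cU\subset\cE^*$, replacing $\cE$ by $\cW$.

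For the first bullet, I would start by noting that two natural semi-orthogonal decompositions of $\dbcoh{\cX}$ are available. On the one hand, Proposition \ref{prop: universal breakdown} applied to the Lefschetz decomposition of Proposition \ref{prop: Lef dec from VGIT} yields
\[
\dbcoh{\cX}=\langle\cD,\;\cA_1(1)\boxtimes\dbcoh{\P_S(\cE^*)},\;\ldots,\;\cA_i(i)\boxtimes\dbcoh{\P_S(\cE^*)}\rangle.
\]
On the other hand, Theorem \ref{thm: HPDMF} identifies $\cD$ with $\dcoh{[Q\times_S\P_S(\cE^*)/G],w}$ and furnishes a second decomposition
\[
\dbcoh{\cX}=\langle\dcoh{[Q\times_S\P_S(\cE^*)/G],w},\;\cZ^+_d\boxtimes\dbcoh{\P_S(\cE^*)},\;\ldots,\;\cZ^+_{\mu-1}\boxtimes\dbcoh{\P_S(\cE^*)}\rangle.
\]
Following the mutation strategy used in Kuznetsov's proof of \cite[Theorem~6.3]{KuzHPD}, I would weave in Beilinson's collection $\langle\cO,\cO(1),\ldots,\cO(N-1)\rangle$ on the fibers of $\P_S(\cE^*)$ and successively mutate the $\cA_j$-blocks through the $\cZ^+_k$-blocks until the dual Lefschetz structure of $\dcoh{[Q\times_S\P_S(\cE^*)/G],w}$ becomes manifest. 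The nested chain $0=\cB_N\subseteq\cdots\subseteq\cB_0$ is then forced by the $d$-periodic block structure of the $\cZ^+_k$ together with the truncation in the top block $\cA_{\lceil\mu/d\rceil-1}$, which accounts for exactly the $\cB_i$'s being zero in the range $N-\lceil\mu/d\rceil<i<N$.

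For the second bullet, the plan is to rerun the entire VGIT-HPD construction of Section \ref{sec:lghpd} with $\cE$ replaced by $\cW=\cE/\cU^\perp$ (and correspondingly $\cE^*$ by $\cV=\cE^*/\cU$). The data $(Q,G,\lambda,\cM)$ still define an elementary wall-crossing with unchanged weight $\mu$ and unchanged $\lambda$-weight $d$; only the rank of the base bundle drops from $N$ to $N-r$. When $r<\lceil\mu/d\rceil$ the inequality $d\le\mu$ persists, so Theorem \ref{thm: HPDMF} applies directly to $X_{\cW}\to\P_S(\cW)$, and the Lefschetz pieces that appear are precisely the tail $\cA_r(r),\ldots,\cA_i(i)$ of the original collection (identified under restriction via Proposition \ref{prop: X breakdown}), while the HPD piece is $\dcoh{[Q\times_S\P_S(\cV)/G],w}$; rearranging gives the stated decomposition. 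When $r\ge\lceil\mu/d\rceil$, the restriction exhausts the Lefschetz pieces and forces the wall-crossing inequality to reverse on the restricted problem, so Theorem \ref{thm: bfkvgitlg}\,(3) applies in place of (1): the fully-faithful functor now embeds $\dbcoh{X_{\cW}}$ into $\dcoh{[Q\times_S\P_S(\cV)/G],w}$, with complement given by the tail of the dual Lefschetz collection from Part 1 with the appropriate twists.

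The main obstacle is the mutation argument of Part 1. The overall template is due to Kuznetsov, but the need to interlace the Beilinson pieces of $\dbcoh{\P_S(\cE^*)}$ with blocks $\cZ^+_k$ that arrive in groups of $d$ (rather than singletons) introduces extra combinatorial bookkeeping: one must verify that no unexpected Ext-classes appear between rearranged pieces and that the resulting chain has exactly the predicted truncation pattern. Part 2 should then be essentially formal once one checks that the hypotheses of Theorem \ref{thm: HPDMF} persist under restriction to the linear section determined by $\cU$.
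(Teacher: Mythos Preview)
Your plan diverges from the paper's argument, and the divergence is not merely stylistic: the approach for the second bullet has a genuine gap.

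For Part~2 you propose to ``rerun the VGIT--HPD construction with $\cE$ replaced by $\cW$'' and to apply Theorem~\ref{thm: HPDMF} ``directly to $X_{\cW}\to\P_S(\cW)$.'' But the hypotheses of Theorem~\ref{thm: HPDMF} require the variety in question to be a GIT quotient $[Q_+/G]$ coming from an elementary wall-crossing on a \emph{smooth} $Q$. The linear section $X_{\cW}=X\times_{\P_S(\cE)}\P_S(\cW)$ is not naturally of this form: it is cut out of $X$ by a section of a rank-$r$ bundle, and there is no reason for its preimage in $Q$ to be smooth or for the elementary wall-crossing structure to descend. Your case split on $r<\lceil\mu/d\rceil$ versus $r\ge\lceil\mu/d\rceil$ has no VGIT content as written, since the invariant $\mu$ you compute is the same in both cases; what actually changes is not $\mu$ but the wall-crossing itself, which you have not set up.

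The paper circumvents this entirely by introducing a \emph{new} one-parameter subgroup $\lambda_3(\alpha)=(\lambda(\alpha),\alpha^d,1)$ on the ambient space $\wtQ=\op{V}_Q(\cM)\times_S\op{V}_S(\cV)$ and the corresponding open set $U^3_{\cV}$. This yields an elementary wall-crossing whose two chambers are identified, via Theorem~\ref{thm: isik} and the equalities $(U^3_{\cV})_-=(U^1_{\cV})_-$, with $\dbcoh{X_{\cW}}$ on the $+$ side and with the HPD category $\dcoh{[Q\times_S\P_S(\cV)/G],w}$ on the $-$ side. The wall invariant for this $\lambda_3$ crossing is $\mu-dr$, so Theorem~\ref{thm: bfkvgitlg} gives the decomposition in one direction when $\mu>dr$ and in the other when $\mu\le dr$, directly producing both cases of the second bullet. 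The dual Lefschetz decomposition of Part~1 then falls out by taking $\cV=\cE^*$, where $X_{\cW}=\emptyset$; no Kuznetsov-style mutation argument is needed.

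In short, the missing idea is the auxiliary wall-crossing $\lambda_3$ on $U^3_{\cV}$: it simultaneously replaces your mutation argument for Part~1 and repairs the gap in Part~2.
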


\begin{remark}
 The notations $\cZ^+_j$ and $\mathcal C_{\cV}$, are used to illustrate that the corresponding categories are equivalent, even though they are embedded by different functors and in different categories. 
 %Similarly, the notation $\cA_j \subset \dbcoh{X \times_{\P_S(\cE)} \P_S(\cW)}$  (respectively $\cB_i \subset  \dcoh{[(U_{\cV}^1)_- / \widetilde{G}], w} $) represents the equivalent essential image of $\cA_j \subset \dbcoh{\cX}$ (respectively $\cB_i \subset  \dcoh{[(U_{\cE^*}^1)_- / \widetilde{G}], w} $) under pullback.
\end{remark}

\begin{center}
\begin{figure}[h]
\tiny
\begin{tikzpicture}

%\filldraw[fill=black!20!white,draw=white!100]
%    (\xMin+1,\yMaxx-1) -- (\xMin+1,\yMaxx) -- (\xMin+3,\yMaxx) -- (\xMin+3,\yMaxx-1) --  (\xMin+1,\yMaxx-1);

\filldraw[fill=black!20!white,draw=white!100]
 (\xMin+ 3, \yMaxx) -- (\xMin+\xShift, \yMaxx) --(\xMin+\xShift,\yMin+\yShift) -- (\xMin+\xShift+1, \yMin+\yShift) -- (\xMin+\xShift+1, \yMin) -- (\xMin+3, \yMin) --  (\xMin+3, \yMaxx-1);

    \foreach \i in {\xMin,...,\xMax} {
        \draw [thin,black] (\i,\yMin) -- (\i,\yMaxx)  node [below] at (\i,\yMin) {};
    }
    \foreach \i in {\yMinshift,...,\yMaxshift} {
        \draw [thin,black,dashed] (\xMin,\i) -- (\xMax,\i) node [left] at (\xMin,\i) {};
    }

    \foreach \i in {\yMin,\yMaxx} {
        \draw [thin,black] (\xMin,\i) -- (\xMax,\i) node [left] at (\xMin,\i) {};
    }
    \node at (-.2, \yMin +.5) {$\cZ_{d-1}$};
               \node at (-.2, \yMin +\yShift - .5) {$\cZ_{\mu-d(\lceil\frac{\mu}{d}\rceil-1)}$}; 
        
            \node at (-.2, \yMaxx -1.5) {};
    \node at (-.2, \yMaxx -.5) {$\cZ_{0}$};

\node at (\xMin+.5, \yMin - .5) {$\cA_0$};
\node at (\xMin+.5 + \xShift, \yMin - .5) {$\cA_{\lceil\frac{\mu}{d} \rceil -1}$};

\node at (\xMax - .5, \yMaxx + .5) {$\cB_0$};
\node at (\xMin+.5 + \xShift, \yMaxx + .5) {$\cB_{N-\lceil\frac{\mu}{d} \rceil }$};
\node at (\xMin+.5, \yMaxx +.5) {$\cB_{N}$};

\draw [red, very thick] (\xMin,\yMaxx) -- (\xMin+\xShift, \yMaxx) -- (\xMin+\xShift, \yMaxx-1) --(\xMin+\xShift,\yMin+\yShift) -- (\xMin+\xShift+1, \yMin+\yShift) -- (\xMin+\xShift+1, \yMin) -- (\xMax, \yMin);

\draw[<->, blue, very thick] (4, \yMin-.5) -- (4, \yMaxx+.5);

\end{tikzpicture}
\normalsize
\captionof{figure}{A visual representation of the components appearing in the semi-orthogonal decompositions in Theorem \ref{thm: MainHPDMF}} \label{fig: extraspecial HPD table}
\end{figure}
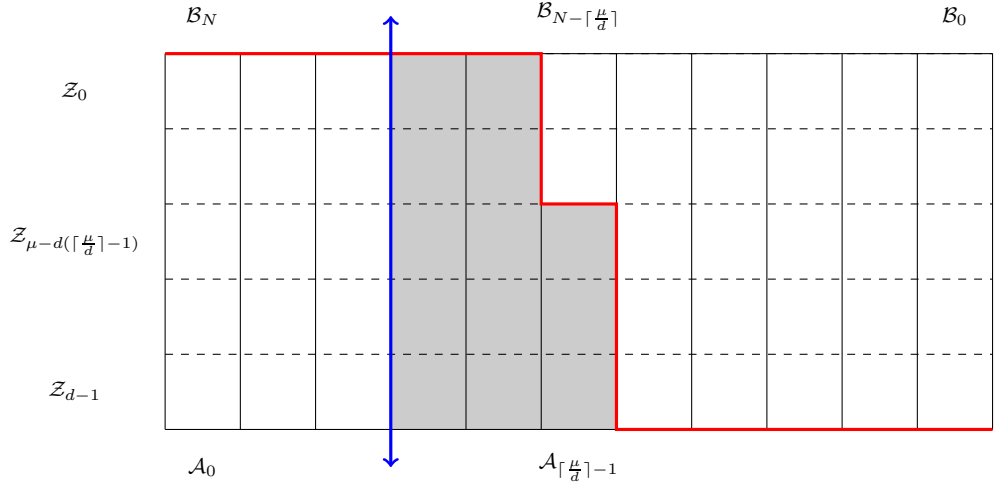
\end{center}

\begin{remark}\label{rem: HPDTableRemark}
  Figure~\ref{fig: extraspecial HPD table} demonstrates the tabular representation of the Fundamental Theorem of Homological Projective Duality in the case of the above theorem. The case when $r < \lceil \frac{\mu}{d} \rceil $ is pictured. The long vertical line is placed to the immediate left of $\cA_r$. Had $r \geq \lceil \frac{\mu}{d} \rceil $, the vertical line would have been to the right of $\cA_{\lceil \frac{\mu}{d} \rceil-1}$. The boxes between the staircase and the vertical line are highlighted. These
 correspond to the additional terms appearing in the decompositions of Theorem \ref{thm: MainHPDMF}. Comparing with Figure \ref{fig: HPD Table} and Remark \ref{rem: HPD table}, the situation here is simpler because in the case when $Q_- = \emptyset$, the Lefchetz decomposition is almost rectangular. 
\end{remark}

Before proving Theorems~\ref{thm: HPDMF} and~\ref{thm: MainHPDMF} we will set up a more complete picture of the various elementary wall-crossings that appear in the proofs. For each quotient bundle $\cV$ of $\cE^*$, we will set up what is, in principle, a variation of GIT quotients problem (we will specify four different elementary wall crossings arising in such a setup), which interpolates between the corresponding linear sections of $X$, $\cX$, the Landau-Ginzburg model $( (U_{\cE^*}^1)_-, \widetilde{G}, \O(\beta), w)$ which is the homological projective dual and the Landau-Ginzburg model whose derived category is equivalent to the category $\cC_V$ in the statement of Theorem \ref{thm: MainHPDMF}. The proofs will then follow from applying Theorem \ref{thm: bfkvgitlg} to some of these wall-crossings. 

Consider the variety
\[
\wtQ:= \op{V}_Q(\cM \oplus p^*\cV) = \op{V}_Q(\cM) \times_S \op{V}_S(\cV),
\]
equipped with a $ \widetilde{G} := G \times \gm\times \gm$-action described by
\begin{center}
\begin{tabular}{c c c}
   $\,$  & $\op{V}_Q(\cM)$ & $\op{V}_S(\cV)$  \\
   $G$ &   $g$    &  $1$ \\ 
   $\GG_m$  & $\alpha_1^{-1}$  & $\alpha_1$ \\
   $\GG_m$  & $\alpha_2$ &  $1$
\end{tabular}
\end{center}
The action of $G$ on $\op{V}_Q(\cM)$ is given by the $G$-equivariant structure on $\cM$ and this action is trivial on the $\op{V}_S(\cV)$ component.  The first $\gm$ acts with weight $-1$ on the fibers of $\op{V}_Q(\cM)$ and with weight $1$ on the fibers of $\op{V}_S(\cV)$. The second $\gm$ acts by dilation only on the fibers of $\op{V}_Q(\cM)$.

To describe the elementary wall-crossings we consider, we define four $\widetilde{G}$-invariant open subsets
\begin{align}
  \label{eq:u1}U_{\cV}^1 & := \wtQ \backslash ( \op{V}_Q(\cM) \times_S \mathbf{0}_{\op{V}_S(\cV)})\\ 
U_{\cV}^2 & := \wtQ \backslash( S_{-\lambda} \times_Q \mathbf{0}_{\op{V}_Q(\cM)} \times_S \op{V}_S( \cV) \cup S_{-\lambda} \times_Q {\op{V}_Q(\cM)} \times_S  \mathbf{0}_{\op{V}_S( \cV)})  \\ 
U_{\cV}^3 & := \wtQ \backslash(\mathbf{0}_{\op{V}_Q(\cM) }\times_S  \op{V}_S(\cV)  ) \\
 \label{eq:u4}U_{\cV}^4 & := \wtQ \backslash ( \op{V}_{{S_\lambda}}(\cM) \times_S \op{V}_S( \cV))
\end{align} 
of $\wtQ$, 
and one-parameter subgroups given by
\begin{align*}
  \lambda_1(\alpha) & := (\lambda(\alpha) , 1,1) \\
\lambda_2(\alpha) & := (1_G,\alpha,1) \\
\lambda_3(\alpha) & := \lambda_1(\alpha) \lambda_2(\alpha)^{d} = (\lambda(\alpha), \alpha^d, 1) \\
\lambda_4(\alpha) & := \lambda_2(\alpha)
\end{align*}

It is convenient to picture the elementary wall crossings we will describe as in Figure~\ref{fig: GIT fan}. Had these elementary wall-crossings corresponded to wall-crossings coming from varying a linearization, the GIT fan would look as in this figure. 

\begin{center}
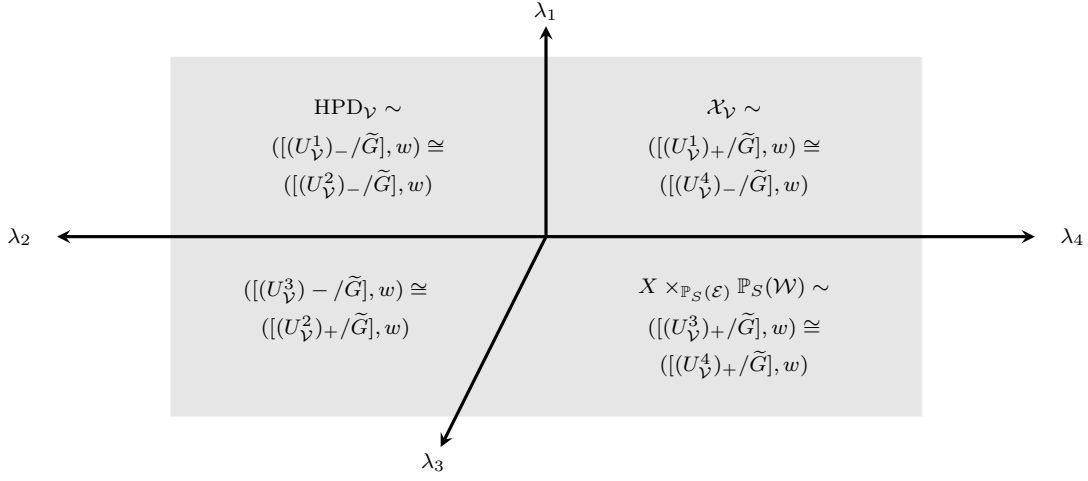
\begin{figure}
\tiny
\begin{tikzpicture}
  [scale=1, vertex/.style={circle,draw=black!100,fill=black!100,thick, inner sep=0.5pt,minimum size=0.5mm}, cone/.style={->,very thick,>=stealth}]
  \filldraw[fill=black!10!white,draw=white!100]
    (-5,2.4) -- (5,2.4) -- (5,-2.4) -- (-5,-2.4) -- (-5,2.4);
  \draw[cone] (0,0) -- (6.5,0);
  \draw[cone] (0,0) -- (0,2.8);
   \draw[cone] (0,0) -- (-6.5,0);
  \draw[cone] (0,0) -- (-1.4,-2.8);
  
  \node at (7, 0) {$\lambda_4$};
  \node at (-7.0, 0) {$\lambda_2$};
    \node at (0, 3) {$\lambda_1$};
      \node at (-1.5,-3) {$\lambda_3$};

          \node at (2.5, 1.7) {$\cX_{\cV} \sim$};
                \node at (2.5, 1.2) {$([(U_{\cV}^1)_+/\widetilde{G}], w) \cong$};
      \node at (2.5, .7) {$([(U_{\cV}^4)_-/\widetilde{G}], w)$};
         
          \node at (-2.5, 1.7) {$\text{HPD}_{\cV} \sim$};
                \node at (-2.5, 1.2) {$([(U_{\cV}^1)_-/\widetilde{G}], w) \cong$};
      \node at (-2.5, .7) {$([(U_{\cV}^2)_-/\widetilde{G}], w)$};

    \node at (-2.8, -.7) {$ ([(U_{\cV}^3)-/\widetilde{G}], w) \cong$};
  \node at (-2.8, -1.2) {$([(U_{\cV}^2)_+/\widetilde{G}], w)$};

          \node at (2.5, -.7) {$X \times_{\P_S(\cE)} \P_S(\cW) \sim$};
                \node at (2.5, -1.2) {$([(U_{\cV}^3)_+/\widetilde{G}], w) \cong$};
      \node at (2.5, -1.7) {$([(U_{\cV}^4)_+/\widetilde{G}], w)$};
\end{tikzpicture}
\normalsize
\captionof{figure}{Hypothetical GIT fan relating categories appearing in HPD} \label{fig: GIT fan}
\end{figure}
\end{center}

To clarify, in what follows, we set
\begin{displaymath}
 (U_{\cV}^i)_{\pm} := U_{\cV}^i \backslash  S_{\pm\lambda_i}.
\end{displaymath}
The explicit formulas for the $S_{\lambda_i}$ can be obtained by comparing the following lemma with equations \ref{eq:u1}-\ref{eq:u4}. 

\begin{lemma}
There are equalities 
\begin{align} 
  (U_{\cV}^3)_- & = (U_{\cV}^2)_+ ={\widetilde Q}_{\cV}\backslash(\mathbf{0}_{\op{V}_Q(\cM)} \times_S \op{V}_S(\cV)\cup \op{V}_{S_{-\lambda}}(\cM)\times_S\mathbf{0}_{\op{V}_S(\cV)} ) \label{eq: comparison 1}\\
(U_{\cV}^1)_- & = (U_{\cV}^2)_- = {\widetilde Q}_{\cV}\backslash(\op{V}_Q(\cM) \times_S \mathbf{0}_{\op{V}_S(\cV)}\cup \mathbf{0}_{\op{V}_{S_{-\lambda}}(\cM)}\times_S\op{V}_S(\cV)) \label{eq: comparison 2} \\
(U_{\cV}^3)_+ & = (U_{\cV}^4)_+ = {\widetilde Q}_{\cV}\backslash(\mathbf{0}_{\op{V}_Q(\cM)} \times_S \op{V}_S(\cV)\cup \op{V}_{S_{\lambda}}(\cM)\times_S\op{V}_S(\cV) )  \\
(U_{\cV}^1)_+ & = (U_{\cV}^4)_- ={\widetilde Q}_{\cV}\backslash(\op{V}_{S_{\lambda}}(\cM) \times_S \op{V}_S(\cV)\cup \op{V}_Q(\cM)\times_S\mathbf{0}_{\op{V}_S(\cV)} ) 
\end{align}
\end{lemma}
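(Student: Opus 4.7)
The plan is to compute each contracting locus $S_{\pm\lambda_i}$, intersect it with the ambient open set $U^i_{\cV}$, and then match the result against the right-hand side of the corresponding equality. Since the stratification $Q = Q_+ \sqcup S_{\lambda} = Q_- \sqcup S_{-\lambda}$ is given and $\cM$ has $\lambda$-weight $d > 0$, the attracting loci can be read off directly from the weights of the $\lambda_i$-actions on the two factors of $\wtQ = \op{V}_Q(\cM)\times_S \op{V}_S(\cV)$.

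First I would write out each action on a point $(q,v,w)\in \wtQ$: explicitly, $\lambda_1(\alpha)\cdot(q,v,w) = (\lambda(\alpha)\cdot q,\,\alpha^d v,\,w)$ and $\lambda_2(\alpha)\cdot(q,v,w) = (q,\,\alpha^{-1}v,\,\alpha w)$. Since $\lambda_3 = \lambda_1\cdot\lambda_2^d$, at $\lambda$-fixed points of $Q$ the combined $v$-weight is $d + d\cdot(-1) = 0$ and the $w$-weight is $d$, so $\lambda_3(\alpha)\cdot(q,v,w) = (\lambda(\alpha)\cdot q,\,v,\,\alpha^d w)$. From these, the attracting loci in $\wtQ$ are immediate; for instance $S_{\lambda_1} = \op{V}_{S_\lambda}(\cM) \times_S \op{V}_S(\cV)$, $S_{-\lambda_1} = \mathbf{0}_{\op{V}_{S_{-\lambda}}(\cM)} \times_S \op{V}_S(\cV)$, $S_{\lambda_3} = \op{V}_{S_\lambda}(\cM) \times_S \op{V}_S(\cV)$, $S_{-\lambda_3} = \op{V}_{S_{-\lambda}}(\cM) \times_S \mathbf{0}_{\op{V}_S(\cV)}$, and $S_{\lambda_2} = \mathbf{0}_{\op{V}_Q(\cM)} \times_S \op{V}_S(\cV)$, $S_{-\lambda_2} = \op{V}_Q(\cM) \times_S \mathbf{0}_{\op{V}_S(\cV)}$, with the same formulas for $\pm\lambda_4$.

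Next, I would intersect each $S_{\pm\lambda_i}$ with $U^i_{\cV}$ and use the elementary identities $\mathbf{0}_{\op{V}_Q(\cM)} = \mathbf{0}_{\op{V}_{S_{\pm\lambda}}(\cM)} \cup \mathbf{0}_{\op{V}_{Q_\mp}(\cM)}$ and $\op{V}_Q(\cM) = \op{V}_{S_{\pm\lambda}}(\cM) \cup \op{V}_{Q_\mp}(\cM)$ to absorb the surviving part of $S_{\pm\lambda_i}$ together with the set defining $U^i_{\cV}$ into the unified expression on the right-hand side. In each of the four equalities, the two removed subsets on the right thus reassemble from the defining set of $U^i_{\cV}$ and the truncated attracting locus.

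The only real bookkeeping subtlety is $\lambda_2$ (and $\lambda_4=\lambda_2$): its single $\gm$-factor acts nontrivially only on fibers, so its fixed locus in $\wtQ$ is the whole base $Q$ (embedded as the zero-zero section), independent of the $\lambda$-stratification of $Q$. The $\lambda$-dependence re-enters only upon intersection with $U^2_{\cV}$ or $U^4_{\cV}$, through the complement of the already-removed $\op{V}_{S_{-\lambda}}(\cM)\times_S \mathbf{0}_{\op{V}_S(\cV)}$ or $\mathbf{0}_{\op{V}_{S_{-\lambda}}(\cM)}\times_S \op{V}_S(\cV)$ (and analogously for $\lambda_4$ with $S_{\lambda}$). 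This is the one spot where care is needed, and where $Q = Q_\mp \sqcup S_{\pm\lambda}$ does the real collapsing; the $\lambda_1$ and $\lambda_3$ cases are immediate from their weight data.
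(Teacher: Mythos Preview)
Your proposal is correct and is exactly the direct verification the paper has in mind; the paper's own proof is the single sentence ``This is easily checked.'' Your computation of each $S_{\pm\lambda_i}$ from the weight data, followed by taking unions with the defining complements of the $U^i_{\cV}$ and simplifying via the obvious inclusions, is precisely what that sentence is asking the reader to do.
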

\begin{proof}
This is easily checked.
\end{proof}

\begin{lemma} \label{lem: new crossing}
There are new elementary wall-crossings $(\mathfrak{K^+}_i,\mathfrak{K^-}_i)$ for $1 \leq i \leq 4$.
 \begin{align*}
U_{\cV}^i & = (U_{\cV}^i)_+ \sqcup S_{\lambda_i}\\
U_{\cV}^i & = (U_{\cV}^i)_- \sqcup{S}_{-\lambda_i} \\
 \end{align*}
with
 \[
 t(\mathfrak{K^+}_i) = 
 \begin{cases} 
 t(\mathfrak{K^+}) & \emph{if } i=1,3 \\
\emph{rk }\cV & \emph{if } i=2, 4
 \end{cases}
 \]
 and
  \[
 t(\mathfrak{K^-}_i) = 
 \begin{cases} 
 t(\mathfrak{K^-}) - d & \emph{if } i=1 \\
 t(\mathfrak{K^-}) - d \cdot  \emph{rk }\cV  & \emph{if } i=3 \\
1 & \emph{if } i=2,4 
 \end{cases}
 \]
\end{lemma}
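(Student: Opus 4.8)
The plan is, for each $i\in\{1,2,3,4\}$, to verify the two conditions of Definition~\ref{definition: HKKN strat} for both $\lambda_i$ and $-\lambda_i$ (with a common choice of fixed component, as required for a wall-crossing), and then to read off $t(\mathfrak{K^+}_i)$ and $t(\mathfrak{K^-}_i)$ from the relative canonical bundle of the corresponding stratum. Everything is bootstrapped from the data of the original wall-crossing $(\mathfrak{K^+},\mathfrak{K^-})$.

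First I would make the fixed and contracting loci explicit. Since the two $\GG_m$-factors are central in $\widetilde{G}=G\times\GG_m\times\GG_m$, one gets $P(\lambda_1)=P(\lambda_3)=P(\lambda)\times\GG_m\times\GG_m$ and $P(\lambda_2)=P(\lambda_4)=\widetilde{G}$. Computing limits coordinate-by-coordinate on the fibres of $\op{V}_Q(\cM)$ and $\op{V}_S(\cV)$ (using that $\cM$ has $\lambda$-weight $d>0$ together with the tabulated $\GG_m$-weights), one finds: for $i=1,3$ the fixed component lies over $Z^0_\lambda=Z^0_{-\lambda}$, so the needed equality $Z^0_{\lambda_i}=Z^0_{-\lambda_i}$ holds; for $i=2,4$ the fixed component is the zero section $Q\hookrightarrow\widetilde{Q}_{\cV}$, restricted to $Q_+$. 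Each stratum $S_{\pm\lambda_i}$ then comes out to be either the total space $\op{V}_{S_{\pm\lambda}}(\cM)\times_S\op{V}_S(\cV)$ over the old stratum, or a partial zero section $\mathbf{0}_{\op{V}_{S_{\pm\lambda}}(\cM)}\times_S\op{V}_S(\cV)$ or $\op{V}_{S_{\pm\lambda}}(\cM)\times_S\mathbf{0}_{\op{V}_S(\cV)}$, or (for $i=2,4$) a zero section of one factor over $Q_{\mp}$ crossed with the total space of the other. These are exactly the identifications recorded in the preceding lemma, which also pins down which of $S_{\lambda_i}$, $S_{-\lambda_i}$ is which.

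Next I would check the HKKN conditions. Closedness of $S_{\pm\lambda_i}$ in $U^i_{\cV}$ follows from the explicit descriptions above together with the closedness of $S_{\pm\lambda}$ in $Q$ and the fact that the opens $U^i_{\cV}$ were designed precisely to excise the offending loci. The isomorphism $\tau_{\pm\lambda_i}\colon[(\widetilde{G}\times Z_{\pm\lambda_i})/P(\pm\lambda_i)]\to S_{\pm\lambda_i}$ reduces, after gauge-fixing the two central $\GG_m$-factors, to the isomorphism $\tau_{\pm\lambda}$ (which holds by hypothesis) with the total-space directions of the vector bundles $\cM$ and $p^*\cV$ pulled back along $\tau_{\pm\lambda}$ carried along; for $i=2,4$ the group acts trivially on $Q$ and $S_{\pm\lambda_i}=Z_{\pm\lambda_i}$, so this is immediate. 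A $\widetilde{G}$-equivariant affine cover of $S^0_{\pm\lambda_i}$ is pulled back from the assumed $G$-equivariant affine cover of $S^0_{\pm\lambda}$ (or is affine over $S$). This establishes the four elementary wall-crossings.

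Finally, to get the weights, note that on the fixed locus the normal bundle $N_{S_{\pm\lambda_i}/U^i_{\cV}}$ splits, via the tangent sequence, as the pullback of $N_{S_{\pm\lambda}/Q}|_{Z^0_{\pm\lambda}}$ (whenever the old stratum appears) plus a copy of the $\cM$-fibre line for each $\cM$-zero section that is cut out, plus $\op{rk}\cV$ copies of a $p^*\cV$-line for each $\cV$-zero section that is cut out. Taking determinants and extracting $\lambda_i$-weights — the $\cM$-line carrying the $\lambda$-weight $d$ adjusted by the relevant $\GG_m$-weights (in $\lambda_3=\lambda_1\lambda_2^{d}$ these combine so that the $\cM$-direction becomes tangent to the stratum), each $\cV$-coordinate carrying the first $\GG_m$-weight — yields $t(\mathfrak{K^+}_i)=t(\mathfrak{K^+})$ and $t(\mathfrak{K^-}_1)=t(\mathfrak{K^-})-d$, $t(\mathfrak{K^-}_3)=t(\mathfrak{K^-})-d\cdot\op{rk}\cV$ for $i=1,3$, and $t(\mathfrak{K^+}_i)=\op{rk}\cV$, $t(\mathfrak{K^-}_i)=1$ for $i=2,4$. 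The step that demands the most care is this last weight accounting: one must keep straight exactly which affine directions are normal versus tangent to each stratum (this is dictated by the preceding lemma, not by a first guess) and combine the $\lambda$-weight $d$ of $\cM$ with the two $\GG_m$-weights correctly, especially for the sheared subgroup $\lambda_3$.
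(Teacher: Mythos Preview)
Your proposal is correct and follows essentially the same approach as the paper: identify the fixed loci and strata explicitly, verify closedness, reduce the isomorphism $\tau_{\pm\lambda_i}$ to the original $\tau_{\pm\lambda}$ by gauge-fixing the two central $\GG_m$-factors (and using $\tau_{\pm\lambda}^*\cM|_{S_{\pm\lambda}}\cong\cO_G\boxtimes\cM|_{Z_{\pm\lambda}}$ to carry the bundle directions across), and then read off the weights from the relative canonical bundle. One small slip: for $i=2$ the fixed component of $\lambda_2$ in $U^2_{\cV}$ is the zero section restricted to $Q_-$, not $Q_+$ (the open $U^2_{\cV}$ removes precisely the zero sections over $S_{-\lambda}$); this does not affect the method or the weight computation.
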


\begin{proof}
We treat the case where $i=1$. The rest are similar. Denote by
\[
i_\pm : Q_\pm \to Q
\]
the open immersions. Notice that
 \begin{align*}
   U_{\cV}^1& = (\op{V}_{Q_+}(i_+^* \cM)) \times_S  (\op{V}_S(\cV) \backslash \mathbf{0}_{\op{V}_S(\cV)}) \, \sqcup \, \op{V}_{S_\lambda}(\cM|_{{S}_{\lambda}}) \times_S (\op{V}_S(\cV) \backslash \mathbf{0}_{\op{V}_S(\cV)})  \\
   U_{\cV}^1& = (\op{V}_{Q}(\cM) \backslash \mathbf{0}_{\op{V}_{S_{-\lambda}}(\cM|_{{S}_{-\lambda}})})\times_S  (\op{V}_S(\cV) \backslash \mathbf{0}_{\op{V}_S(\cV)}) \, \sqcup \, \mathbf{0}_{\op{V}_{S_{-\lambda}}(\cM|_{{S}_{-\lambda}})}\times_S (\op{V}_S(\cV) \backslash \mathbf{0}_{\op{V}_S(\cV)})  \\
 \end{align*}
We will verify that these are elementary HKKN stratifications. 

 As $S_{\pm \lambda}$ are closed by assumption, it is clear that  
 \begin{align*}
  S_{\lambda_1} & = \op{V}_{S_\lambda}(\cM|_{{S}_{\lambda}}) \times_S (\op{V}_S(\cV) \backslash \mathbf{0}_{\op{V}_S(\cV)}) \\
  S_{-\lambda_1} & = \mathbf{0}_{\op{V}_{S_{-\lambda}}(\cM|_{{S}_{-\lambda}})}\times_S (\op{V}_S(\cV) \backslash \mathbf{0}_{\op{V}_S(\cV)}) 
 \end{align*}
 are closed in $U_{\cV}^1$. Furthermore,
 \begin{align*}	
  Z_{\lambda_1} & = \op{V}_{Z_{\lambda}}(\cM|_{{Z}_{\lambda}}) \times_S (\op{V}_S(\cV) \backslash \mathbf{0}_{\op{V}_S(\cV)}) \\
  Z_{-\lambda_1} & =  \mathbf{0}_{\op{V}_{Z_{-\lambda}}(\cM|_{{Z}_{-\lambda}})}\times_S (\op{V}_S(\cV) \backslash \mathbf{0}_{\op{V}_S(\cV)}).
 \end{align*}
By assumption
 \[
 \tau_{\lambda}: [G \times Z_{\lambda}/P(\lambda)] \to S_{\lambda}
 \]
is an isomorphism.  Also
\[
P(\pm\lambda_1) = P(\pm \lambda) \times \gm \times \gm.
\]
  
It remains to check that the maps
\[
\tau_{\lambda_1}: [(G \times \gm\times \gm) \times \op{V}_{Z_{\lambda}}(\cM|_{{Z}_{\lambda}}) \times_S (\op{V}_S(\cV) \backslash \mathbf{0}_{\op{V}_S(\cV)}) / P(\lambda_1)] \to \op{V}_{S_\lambda}(\cM|_{{S}_{\lambda}}) \times_S (\op{V}_S(\cV) \backslash \mathbf{0}_{\op{V}_S(\cV)}) 
\]
and
\[
\tau_{-\lambda_1}: [(G \times \gm\times \gm) \times \mathbf{0}_{\op{V}_{Z_{-\lambda}}(\cM|_{{Z}_{-\lambda}})}\times_S (\op{V}_S(\cV) \backslash \mathbf{0}_{\op{V}_S(\cV)}) 
 / P(-\lambda_1)] \to \mathbf{0}_{\op{V}_{S_{-\lambda}}(\cM|_{{S}_{-\lambda}})}\times_S (\op{V}_S(\cV) \backslash \mathbf{0}_{\op{V}_S(\cV)})
 \]
are isomorphisms.

We will check this for the first map; the proof for the second one is similar. First, we can cancel the $\gm\times \gm$ with the one appearing in $P(\lambda_1) = P(\lambda) \times \gm\times \gm$ and look at the map
 \[
  [G \times \op{V}_{Z_{\lambda}}(\cM|_{{Z}_{\lambda}}) \times_S (\op{V}_S(\cV) \backslash \mathbf{0}_{\op{V}_S(\cV)}) / P(\lambda)] \to \op{V}_{S_\lambda}(\cM|_{{S}_{\lambda}}) \times_S (\op{V}_S(\cV) \backslash \mathbf{0}_{\op{V}_S(\cV)}) 
 \]
 Now, we can forget the $(\op{V}_S(\cV)\backslash \mathbf{0}_{\op{V}_S(\cV)})$ on both sides, as $P(\lambda)$ acts trivially on this factor, and look at the map
 \[
  [G \times \op{V}_{Z_{\lambda}}(\cM|_{{Z}_{\lambda}}) / P(\lambda)] \to \op{V}_{S_\lambda}(\cM|_{{S}_{\lambda}})  
 \]
 or equivalently
 \[
  [\op{V}_{G\times Z_{\lambda}}(\O_G \boxtimes \cM|_{G \times {Z}_{\lambda}}) / P(\lambda)] \to  \op{V}_{S_\lambda}(\cM|_{{S}_{\lambda}}) .
 \]
 We have an isomorphism, $\tau_{\lambda}^*\mathcal M|_{S_{\lambda}} \cong \mathcal O_{G} \boxtimes \mathcal M|_{Z_{\lambda}}$. This induces the desired isomorphism on the corresponding geometric vector bundles. 
 
 For the computation of $t(\mathfrak{K^{+}}_1)$, observe that the relative canonical bundle $\omega_{S_{\lambda_1}/U_1}$ is the pullback of $\omega_{S_{\lambda}/Q}$ to $S_{\lambda_1}$.
 %$$Z^{0}_{\lambda_1} = {\mathbf{0}}_{\op{V}_{Z^0_\lambda}(\cM|_{Z^0_{\lambda}})}\times_S (\op{V}_{S}(\cV) \backslash) \mathbf{0}_{\op{V}_{S}(\cV)}$$
 Since $\lambda_1 = (\lambda,1,1)$, the $\lambda_1$-weight of $\omega_{S_{\lambda_{1}}/U_{1}}|_{Z^0_\lambda}$ is the same as the $\lambda$-weight of $\omega_{S_{\lambda}/Q}|_{Z^0_\lambda}$. Therefore, $t(\mathfrak{K^{+}}_1) = t(\mathfrak{K^{+}})$. For $t(\mathfrak{K^{-}}_1)$, observe that $\omega_{S_{-\lambda_1}/U_{1}}$ is the pullback of $\omega_{S_{-\lambda}/Q}$ tensored with $\cM^{*}$. Therefore, we have $t(\mathfrak{K^{-}}_1) = t(\mathfrak{K^{-}})-d$. 
 \end{proof}

\begin{remark}
The fourth elementary wall crossing corresponding to $U_{\cV}^4$ and $\lambda_4$ is not used in the proofs which follow.  However, it is interesting to note that this wall crossing can be used to prove the semi-orthogonal decompositions appearing in the Fundamental Theorem of Homological Projective Duality in the case where the Lefschetz collection is the trivial one with $\cA_0 = \dbcoh{X}$, which would give that $$\dbcoh{\cX_L}=\langle\dbcoh{X_L},\dbcoh{X},\ldots,\dbcoh{X}\rangle.$$
\end{remark}

As we noted above, $\cX$, the relative universal hyperplane section of the $S$-morphism $f: X \to \P_S(\cE)$, is the zero locus of the pullback of the canonical section, $\theta_{\cE} \in \Gamma( \P_S(\cE) \times_S \P_S(\cE^*) , \mathcal O(1,1))$, to $X \times_S \P(\cE^*)$.  Furthermore, we constructed a unique $G\times\gm$-invariant function on $[(U_{\cE^*}^1)_+]$ that corresponds to $\theta_{\cE}$. Since $\Gamma(\widetilde{Q}_{\cE^*}, \O)^{G \times \gm}  \cong \Gamma([(U_{\cE^*}^1)_+ /G \times \gm], \O)$  we observe that there exists a unique $w$, a $G \times \gm$-invariant function on $\widetilde{Q}_{\cE^*}$, corresponding to the canonical section 
\[
(\theta_{\cE}: \O \to \cE \times \cE^*) \in  \Gamma(S, \op{Sym}^1(\cE \otimes_{\O_S} \cE^*)).
\]
Moreover, $w$ has weight $1$ with respect to the third factor, therefore $w$ is a semi-invariant function with respect to the $\widetilde{G}$-action with character $\beta(g,\alpha_1,\alpha_2) = \alpha_2$. In other words, $w$ corresponds to a section in $\Gamma(\widetilde{Q}_{\cE^*}, \cO(\beta))^{\widetilde{G}}$. 
We can apply the same construction for $\cV$ instead of $\cE^*$ and we will abuse notation by also writing this section as $w$ when $U_{\cV}^i$ is an open subset of $\wtQ$ for general $\cV$ even though $w$, in general, depends on both $\cV$ and $1 \leq i \leq 4$.

%We are now ready to prove Theorems~\ref{thm: HPDMF} and~\ref{thm: MainHPDMF}

\begin{proof}[Proof of Theorem~\ref{thm: HPDMF}]

We will prove the statement for any quotient bundle $\cV$ of $\cE^*$ and then, setting $\cV=\cE^*$, we will obtain the desired result.

Consider the gauged Landau-Ginzburg model, $(U_{\cV}^1, \widetilde{G}, \O(\beta), w)$ as above. By Theorem~\ref{thm: isik}, there is an equivalence
\[
\dbcoh{\cX_{\cV}} \cong \dcoh{[(U_{\cV}^1)_+/ \widetilde{G}], w}.
\]
Consider the elementary wall-crossing $((\mathfrak K^+)_1, (\mathfrak K^-)_1)$ from Lemma~\ref{lem: new crossing}.  
Notice that since, by assumption, the $G$-action has weight $d>0$  on the fibers of $\op{V}_Q(\cM)$, we can choose the following connected component of the fixed locus of $\lambda_1$:
$$ Z_{\lambda_1}^0  :=  (\mathbf{0}_{\op{V}_{Z_{\lambda}^0}(\cM|_{Z^0_{\lambda}})} \times_S \op{V}_S(\cV)) \cap U_{\cV}^1$$
where $Z_{\lambda}^0$ is the connected component of the fixed locus chosen for $(\mathfrak K^+, \mathfrak K^-)$.  
Finally, inside $\widetilde{G}$ we have, 
\[
C(\lambda_1) = C(\lambda) \times \gm \times \gm
\]
and 
\[
[Z_{\lambda_1}^0 / C(\lambda_1)]  \cong [Z_{\lambda}^0 / C(\lambda) \times \gm] \times_S \P_S(\cV)
\] 
 where the $\gm$-action is trivial.
 Furthermore, for this choice, 
\begin{displaymath}
{S}_{\lambda_1}^0 = \op{V}((\cM \oplus p^* \cV)|_{S^0_{\lambda}}) \cap U_{\cV}^1
\end{displaymath}
which admits a $G$-invariant affine cover as we have assumed the existence of such for $S^0_{\lambda}$.

Therefore, we may apply Theorem~\ref{thm: bfkvgitlg} to obtain a weak semi-orthogonal decomposition
\[
 \dbcoh{\mathcal X \times_{\P_S(\cE)} \P_S(\cW)} = \langle  \cZ^+_0 \boxtimes \dbcoh{\P_S(\cV)}, \ldots, \cZ^+_{\mu-1} \boxtimes \dbcoh{\P_S(\cV)}, \dcoh{[(U_{\cV}^1)_-/ \widetilde{G}], w} \rangle.
\]
As $\mathcal X$ is smooth and proper, by \cite[Corollary 3.1.5]{BvdB} and \cite[Proposition 2.8]{BK}, we have a semi-orthogonal decomposition and can mutate to get a new semi-orthogonal decomposition
\[
 \dbcoh{\mathcal X_{\cV}} = \langle  \dcoh{[(U_{\cV}^1)_-/ \widetilde{G}], w}, \cZ^+_0 \boxtimes \dbcoh{\P_S(\cV)}, \ldots, \cZ^+_{\mu-1} \boxtimes \dbcoh{\P_S(\cV)} \rangle.
\]
This identifies $\mathcal D_{\cV}$ with $ \dcoh{[(U_{\cV}^1)_-/ \widetilde{G}], w}$. Taking $\cV = \cE^*$, we get the desired semi-orthogonal decomposition.

The $\P_S(\cE^*)$-linearity of the fully faithful functor $\dcoh{[(U_{\cE^*}^1)_-/ \widetilde{G}], w}\ra \dbcoh{\mathcal X}$ follows from the linearity of all the functors involved. 
\end{proof}

Since, in the statement of Theorem \ref{thm: MainHPDMF}, we assume that $Q_{-} = \emptyset$, the picture is simpler. In this case, we have $(U_{\cV}^2)_+ = (U_{\cV}^2)_-$ and the linear sections of the weak homological projective dual can be directly compared to $X \times_{\P_S(\cE)} \P_S(\cW)$.  Had these elementary wall-crossings corresponded to wall-crossings coming from varying a linearization, the GIT fan would look as in Figure~\ref{fig: GIT fan special case}.  

\begin{center}
\begin{figure}[h]
\tiny
\begin{tikzpicture}
  [scale=1, vertex/.style={circle,draw=black!100,fill=black!100,thick, inner sep=0.5pt,minimum size=0.5mm}, cone/.style={->,very thick,>=stealth}]
  \filldraw[fill=black!10!white,draw=white!100]
    (-5,2.4) -- (5,2.4) -- (5,-2.4) -- (-5,-2.4) -- (-5,2.4);
  \draw[cone] (0,0) -- (6.5,0);
  \draw[cone] (0,0) -- (0,2.8);
  \draw[cone] (0,0) -- (-1.4,-2.8);
  
  \node at (7, 0) {$\lambda_1$};
    \node at (0, 3) {$\lambda_4$};
      \node at (-1.5,-3) {$\lambda_3$};
      
        \node at (2.5, 1.7) {$\cX \times_{\P_S(\cE)} \P_S(\cW) \sim $};
               \node at (2.5,1.2) {$ ([(U_{\cV}^1)_+/ \widetilde{G}], w) \cong $};
               \node at (2.5,.7) {$ ([(U_{\cV}^4)_-/ \widetilde{G}], w)$};
        
          \node at (-2.5, 1.2) {$\text{HPD}_{\cV} \sim$};
                \node at (-2.5, 0.8) {$([(U_{\cV}^1)_-/\widetilde{G}], w) \cong$};
                \node at (-2.5, 0.3) {$([(U_{\cV}^2)_-/\widetilde{G}], w) \cong$};
                \node at (-2.5, -0.2) {$([(U_{\cV}^2)_+/\widetilde{G}], w) \cong$};
      \node at (-2.5, -0.7) {$([(U_{\cV}^3)_-/\widetilde{G}], w)$};

  \node at (2.5, -.7) {$X \times_{\P_S(\cE)} \P_S(\cW) \sim$};      
    \node at (2.5, -1.2) {$([(U_{\cV}^3)_+/\widetilde{G}], w) \cong$};
    \node at (2.5, -1.7) {$([(U_{\cV}^4)_+)/\widetilde{G}], w)$};

\end{tikzpicture}
\normalsize
\captionof{figure}{Hypothetical GIT fan relating categories appearing in HPD in the case $Q_- = \emptyset$} \label{fig: GIT fan special case}
\end{figure}
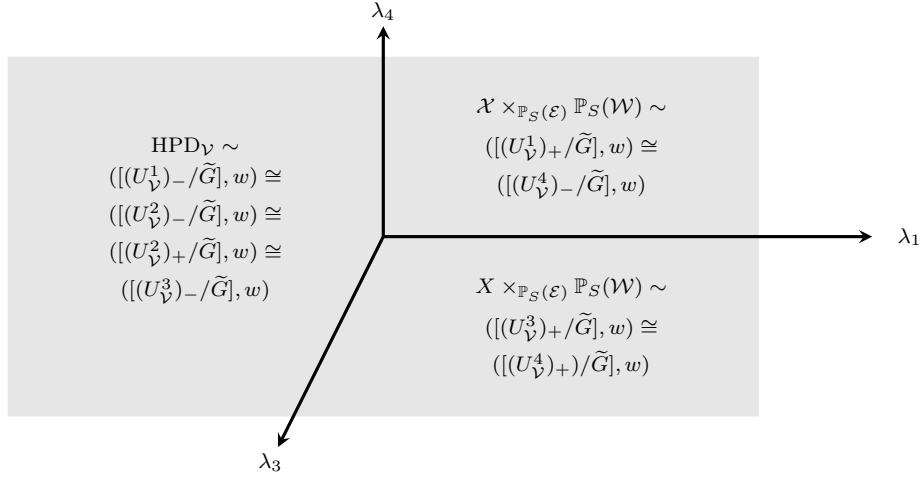
\end{center}

\begin{remark}
  \label{rem:simplification}
In this case, we can simplify the weak homological projective dual $$( (U_{\cE^*}^1)_-, \widetilde{G}, \O(\beta), w)$$ to the Landau-Ginzburg model 
\begin{equation*}
(Q\times_S \P_S(\cE^*),G,\cM\boxtimes \cO_{\P_S(\cE^*)}(1),w),
\end{equation*} 
as well as its linear sections, by cancelling the $\gm$-actions which are now free. That is, 
for any quotient bundle $\cV$ of $\cE^*$,  the Landau-Ginzburg models 
$$( (U_{\cV}^1)_-, \widetilde{G}, \O(\beta), w)$$ and  
$$(Q\times_S \P_S(\cV),G,\cM\boxtimes \cO_{\P_S(\cV)}(1),w)$$ have equivalent derived cateogies.
\end{remark}

\begin{remark}
  Although the second elementary wall crossing corresponding to $U_{\cV}^2$ and $\lambda_2$ is not used in the proof below, we have chosen to keep it in the main construction (see Figure~\ref{fig: GIT fan special case}), as it still allows one to construct semiorthogonal decompositions similar to the ones in Theorem \ref{thm: MainHPDMF}, albeit not in the correct order.
\end{remark}

\begin{proof}[Proof of Theorem~\ref{thm: MainHPDMF}]

%We will first use the wall crossing $((\mathfrak K^+)_2, (\mathfrak K^-)_2)$ as in Lemma \ref{lem: new crossing}. Consider $U_{\cV}^2$ together with the (restriction of the) function $w$ and the additional $\gm$-action scaling the fibers of $\op{V}_Q(\cM)$ (as before).  All together we have a $\widetilde{G}$-action on $U_{\cV}^2$.  As the additional $\mathbb{G}_m$-action commutes, the elementary wall crossing $((\mathfrak K^+)_2, (\mathfrak K^-)_2)$ persists.  The fixed locus of $\lambda_2$ is 
%\[
%  Z^0_{\lambda_2} = V_{Q_-}(\cM) \times_S \mathbf0_{V_S(\cV)}
%%\mathbf{0}_{\op{V}_{Z_{\lambda}^0}(\cM|_{Z^0_{\lambda}})}\times_S\mathbf0_{\op{V}_S(\mathcal V)} \cong Q_-
%\]
%Furthermore $C(\lambda_2) \cong G \times \gm$ acts with the usual $G$-action on $Q_-$ and with a trivial $\gm$-action.
%Also notice that we have a character 
%\[
%\sigma: \widetilde{G} \to \gm
%\]
%given by projection onto the second factor and $\sigma \circ \lambda_2 = \op{Id}$.
%We now apply Theorem~\ref{thm: bfkvgitlg}, Lemma~\ref{lemma: wall compositions are same}, and  Proposition~\ref{cor: isik}.  For notational purposes, we denote the essential image(s) of the embedding of the category $\dbcoh{[Q_-/G]}(\O(\sigma^j))$ by $\cY_j$.  We obtain a semi-orthogonal decomposition
%\begin{equation} 
%\dcoh{[(U_{\cV}^2)_-/\widetilde{G}] , w} = 
%\langle \dcoh{[(U_{\cV}^2)_+ /\widetilde{G}], w}, \cY_0, \ldots,\cY_{r-2} \rangle \label{eq: bar}
%\end{equation}

  We consider $U^{3}_{\cV}$ and the elementary wall-crossing $((\mathfrak K^+)_3, (\mathfrak K^-)_3)$ of Lemma~\ref{lem: new crossing}. The fixed locus of $\lambda_3$ is 
\[
  Z^0_{\lambda_3} = \left( \op{V}_{Z_{\lambda}^0}(\cM|_{Z^0_{\lambda}}) \backslash {\mathbf 0}_{\op{V}_{Z_{\lambda}^0}(\cM|_{Z^0_{\lambda}})}  \right) \times_S \mathbf{0}_{\op{V}_S(\cV)}.
\] 
and since $C(\lambda_3) = C(\lambda) \times \gm \times \gm$ we may cancel the fibers of this line bundle with the first $\gm$-action to obtain an isomorphism
\[
[Z^0_{\lambda_3} / C(\lambda_3) ] \cong [Z^0_{\lambda} / C(\lambda) \times \gm]
\]

We apply Theorem~\ref{thm: bfkvgitlg} and Proposition~\ref{cor: isik} to two cases. When $\mu > dr$ we obtain a semi-orthogonal decomposition
\begin{equation} \label{eq: hat1}
\dcoh{[(U_{\cV}^3)_+/\widetilde{G} ] , w} = \langle \dcoh{[(U_{\cV}^3)_- /\widetilde{G}], w}, \cZ^+_0, \ldots, \cZ^+_{\mu-dr-1}\rangle.
\end{equation}
When $\mu \leq dr$ we obtain a semi-orthogonal decomposition
\begin{equation} \label{eq: hat2}
\dcoh{[(U_{\cV}^3))_-/\widetilde{G}] , w} = \langle \dcoh{[(U_{\cV}^3)_+ /\widetilde{G}], w}, \cZ^-_0, \ldots, \cZ^-_{dr-\mu-1}\rangle,
\end{equation}
where, as before, we denote by $\cZ^-_k$ the essential images of the fully faithful functors $\Upsilon^-_{k}$.

Therefore, in the case $\mu \leq dr$ we have 
\begin{equation}\label{eq: eq1}
\begin{aligned} 
  \dcoh{[(U_{\cV}^1)_-/\widetilde{G} ] , w} &  = \dcoh{[(U_{\cV}^3)_- /\widetilde{G}], w} \\ 
&  =  \langle \dcoh{[(U_{\cV}^3)_+ /\widetilde{G}], w}, \cZ^-_0, \ldots, \cZ^-_{dr-\mu-1}\rangle \\
&  =  \langle \dbcoh{X \times_{\P_S(\cE)} \P_S(\cW)}, \cZ^-_0, \ldots, \cZ^-_{dr-\mu-1}\rangle \\
&  =  \langle \dbcoh{X \times_{\P_S(\cE)} \P_S(\cW)}, \cZ^+_0, \ldots, \cZ^+_{dr-\mu-1}\rangle 
\end{aligned}
\end{equation}
where the the second line comes from \eqref{eq: hat2}, the third is by Theorem~\ref{thm: isik} and using the fact that $X \times_{\P_S(\cE)} \P_S(\cW)$ is a complete linear section, and the last comes from equivalences between the essential images $\cZ^+_j$ of $\Upsilon^+_j$ and the essential images $\cZ^-_j$ of $\Upsilon^-_j$. 
%in $\dcoh{([(U_{\cV}^2)_+ / \widetilde{G}] , w)}$ and the essential images $\cZ^-_j$ of $\Upsilon^-_j$ in $\dcoh{([(U_{\cV}^2)_- / \widetilde{G}] , w)}$. 

In the case where, $dr < \mu$, as before, considering the decomposition \eqref{eq: hat1} will suffice.

We can now proceed to the proof of the statements in the theorem. Setting $\cV=\cE^*$ and noticing that in this case $X \times_{\P_S(\cE)} \P_S(\cW)=\emptyset$ , we obtain the dual Lefschetz decomposition \[ \dbcoh{[(U_{\cE^*}^1)_-/ \widetilde{G}], w)}=\langle\mathcal B_{N -1}(-N +1),\ldots,\mathcal B_1(-1),\mathcal B_0\rangle \] where
  \[
  \cB_i = \begin{cases}
  \langle \cZ_0^+, \ldots, \cZ_{d-1}^+ \rangle & 0 \leq i \leq N - \lceil\frac{\mu}{d} \rceil -1 \\
  \langle \cZ^+_{\mu+1-d(\lceil\frac{\mu}{d}\rceil-1)}, \ldots, \cZ^+_{d-1} \rangle & i = N - \lceil\frac{\mu}{d} \rceil\\
  0 & N - \lceil\frac{\mu}{d} \rceil < i < N \\
    \end{cases}.
 \]

 Combined with Remark \ref{rem:simplification}, equation \ref{eq: eq1}, gives the statement of the theorem in the case where $\mu \leq dr$  and similarly equation \ref{eq: hat1} gives the statement of the theorem in the case where $dr < \mu$ (see Figure~\ref{fig: extraspecial HPD table} and Remark~\ref{rem: HPDTableRemark}).

\end{proof}

\begin{remark} \label{rem: derived scheme}
 Dropping the assumption that $X \times_{\P_S(\cE)} \P_S(\cW)$ is a complete linear section the theorem above continues to hold if we replace $X \times_{\P_S(\cE)} \P_S(\cW)$ either by the gauged Landau-Ginzburg model $(U_{\cV}^3, \widetilde{G}, \O(\beta), w)$ or equivalently by the derived fiber product $X \times_{\P_S(\cE)}^{\mathbb{L}} \P_S(\cW)$ (see Remark 4.7 of \cite{Isik}).
\end{remark}

\subsection{A first example: Projective Bundles}

In this section we provide an elementary and explicit example of Homological Projective Duality using the results of the previous section. The results presented here were first proved in \cite{KuzHPD}. 

 Let $\mathcal P$ be a locally-free coherent sheaf on $B$ with,
 \[
 V:= \op{H}^0(\mathcal P)^* \neq 0.
 \] 
  For the projective bundle, 
  \[
  \pi: \P_B(\mathcal P) \to B,
  \]
   the relative invertible sheaf, $\O_{\op{\P(\cP)}}(1)$, provides a a map,
  \[
  j: \P_B(\mathcal P) \to \P(V).
  \]
  With the notation as in the previous section, we set 
  \begin{align*}
  Q  & = \op{V}_B(\mathcal P), \\
  G & = \gm, \\
  \lambda(\alpha) & = \alpha^{-1}, \\
  \cM & = \O(\chi), \\
  S & =\text{Spec }k,
  \end{align*}
  where $\gm$ acts by fiber-wise dilation and $\chi(\alpha)=\alpha$.
 It follows that
  \begin{align*}
  [Q_+/G] & = \P_B(\mathcal P), \\
  [Q_-/G] & = \emptyset \\
  \mu & = \text{rk }\mathcal P, \\
  d & = 1, \\
  \mathcal A_s & = \pi^*\dbcoh{B} \text{ for } 0 \leq s < \mu
  \end{align*}
  
  By Remark~\ref{rem:simplification}, the weak homological projective dual reduces to
  \[
(\op{V}_B(\mathcal P) \times_k \P(V^*), \gm, \O(\chi) \boxtimes \O(1), w)
  \]
  with $\gm$ acting fiber-wise with weight $1$.  This is isomorphic to
  \[
(\op{V}_{B \times_k \P(V^*)}{(\mathcal P \boxtimes \O(1))}, \gm, \O(\chi), w) 
  \]
  where $\gm$ acts fiber-wise with weight $1$. Therefore, in this case, we can do more.  Namely, we may apply Theorem~\ref{thm: isik} to see that
  \[
  \dcoh{[\op{V}_{B \times_k \P(V^*)}{(\mathcal P \boxtimes \O(1))}/ \gm], w} \cong \dbcoh{Z(w)}
  \]
  where $Z(w)$ is the zero locus of $w$ in $B \times_k \P(V^*)$. Furthermore, by definition, $Z(w)$ can be described as the set
\[
Z(w) = \{ (b, s) | s(b)=0 \} \subseteq B \times_k \P(V^*).
\]

\begin{remark}
This is precisely the homological projective dual obtained by Kuznetsov in \cite{KuzHPD}.  Also notice, as observed in \cite[Lemma 8.1]{KuzHPD}, that
\[
Z(w) \cong \P_B(\mathcal P^{\perp})
\]
where $\mathcal P^{\perp}$ is the locally-free coherent sheaf defined as the kernel of the evaluation map
\[
V^* \otimes \O_B \to \mathcal P.
\]
\end{remark}

\begin{remark}
If we project down to $\P(V^*)$ then the fiber over $s \in V^* = \op{H}^0(B, \mathcal P)$ is precisely the vanishing of $s$.  In particular, the image is the set of degenerate sections of $\mathcal P$.  When $\text{rk }\mathcal P = \text{dim }B +1$, this is precisely the projective dual of  $\P_B(\mathcal P)$ (see Theorem 3.11 in \cite{GKZ}).  However, unlike the usual projective dual, the homological projective dual is smooth.
\end{remark}

\section{Homological Projective Duality for d-th degree Veronese embeddings}

We will now apply the results of the previous two sections to construct a homological projective dual to the degree $d$ Veronese embedding. In view of potential applications, we will do this in the relative setting. Let $S$ be a smooth, connected variety and $\mathcal P$ be a locally-free coherent sheaf on $S$. We consider the relative degree $d$ Veronese embedding for $d > 0$, 
  \begin{displaymath}
   g_d: \P_S(\mathcal P) \to \P_S(S^d\mathcal P).
  \end{displaymath}

Notice that $g_d^*(\cO_{\P({S^d\mathcal P})}(1))\cong \cO_{\P({\mathcal P})}(d)$. Consider the Lefschetz decomposition 
$$\dbcoh{\P_S(\mathcal P)}=\langle\cA_0,\ldots,\cA_i(i)\rangle$$
where the subcategories $\cA_j$ are defined to be
\begin{align*}
\cA_0=\ldots =\cA_{i-1}=\langle p^*\dbcoh{S},\ldots,p^*\dbcoh{S}(d-1)\rangle \\
\cA_i=\langle p^*\dbcoh{S},\ldots,p^*\dbcoh{S}(k-1)\rangle
\end{align*} 
where $k=\text{rk }\mathcal P-d(\lceil\frac{\text{rk }\mathcal P}{d} \rceil-1)$.

We will first consider $\P_S(\cP)$ as a quotient and use the results of Section \ref{sec:lghpd}. Let us take $Q=\op{V}_S(\mathcal P)$ and consider the $G=\gm$-action given by fiber-wise dilation. 
Take the character given by $\chi (\alpha)=\alpha^d$ and the invertible sheaf $\cM=\O(\chi)$ on $Q$. 
Taking the one-parameter subgroup $\lambda : \gm \to \gm$ given by $\lambda(\alpha) = \alpha^{-1}$, we see that we have an elementary wall crossing with
$$S_\lambda = \mathbf{0}_{\op{V}_S(\cP)},$$
$$S_{-\lambda} = \op{V}_S(\cP).$$
We get that
  \begin{align*}
  [Q_+/G] & = \P_S(\mathcal P), \\
  [Q_-/G] & = \emptyset \\
  \mu & = \text{rk }\mathcal P-d, \\
  d & = d, \\
    \end{align*} 
where $d$ is the weight of the $\lambda$-action on $\cM$. This shows that $\cM$ induces the morphism
$g_d : \P(\cP) \rightarrow \P(S^d\cP^*).$

Using Proposition \ref{prop: Lef dec from VGIT}, we recover the Lefschetz decomposition with
\begin{align*} 
\cA_0=\ldots =\cA_{i-1}=\langle p^*\dbcoh{S},\ldots,p^*\dbcoh{S}(d-1)\rangle , \\
\cA_i=\langle p^*\dbcoh{S},\ldots,p^*\dbcoh{S}(k-1)\rangle.
\end{align*}

The universal degree $d$ polynomial $w$ is given by
\begin{displaymath}
   w := (g_d \times 1)^* \theta \in \Gamma(\P_S(\mathcal P)) \times_S \P_S(S^d\mathcal P^*), \mathcal O_{\P_S(\mathcal P)}(d) \boxtimes \mathcal O_{\P_S(S^d\mathcal P^*)}(1)),
\end{displaymath}
where $\theta$ is the tautological section in $\Gamma( \P_S(S^d\mathcal P) \times_S \P(S^d \mathcal P^*),\mathcal O_{\P(S^d\cP)}(1) \boxtimes \mathcal O_{\P_S(S^d\mathcal P^*)}(1))$. 
The zero locus $w$ in $\P_S(\mathcal P) \times_S \P_S(S^d\mathcal P^*)$ is the universal hyperplane section $\cX$ of $\P_S(\mathcal P)$ with respect to the embedding $g_d$.

We have thus constructed a Landau-Ginzburg model which is a homological projective dual.

\begin{theorem-section}\label{prop:veroneselghpd}
  The gauged Landau-Ginzburg model $([\op{V}_S(\mathcal P)\times_S\P_S(S^d\mathcal P^*)/{\gm}],w)$ is a weak homological projective dual to $\P_S(\mathcal P)$ with respect to the embedding $g_d$ and the Lefschetz decomposition constructed above.

Moreover, we have:
\begin{itemize}
\item The derived category of the Landau-Ginzburg model $([\op{V}_S(\mathcal P)\times_S\P_S(S^d\mathcal P^*)/{\gm}],w)$ admits a dual Lefschetz collection
  \[ \dcoh{[\op{V}_S(\mathcal P)\times_S\P_S(S^d\mathcal P^*)/{\gm}],w} = \langle\mathcal B_{j}(-j),\ldots,\mathcal B_1(-1),\mathcal B_0\rangle \]

\item Let $\cV \subset (S^d\mathcal P^*)/\cU$ be a quotient bundle and $\cW = (S^d\mathcal P) / \cU^\perp$. Let $r = \op{rk}\cU$. Assume that $\P_S(\mathcal P) \times_{\P_S(S^d\mathcal P)} \P_S(\cW)$ is a complete linear section (not necessarily smooth). Then, there exist semi-orthogonal decompositions,
  \begin{itemize}
    \item[$\cbullet$] If $r < \lceil \frac{\op{rk}\cP - d}{d} \rceil - 1$, 
      \begin{eqnarray*}
\dbcoh{\P_S(\mathcal P) \times_{\P_S(S^d\mathcal P)} \P_S(\cW)}=\langle \dcoh{[\op{V}_S(\mathcal P)\times_S\P_S(\cV)/{\gm}],w } ,\mathcal A_r(1),\ldots \\ \ldots,\mathcal A_i(i-r+1)\rangle,
      \end{eqnarray*}
\item[$\cbullet$] If $r \geq \lceil \frac{\op{rk}\cP - d}{d} \rceil - 1$,
  \begin{eqnarray*}
   \dcoh{[\op{V}_S(\mathcal P)\times_S\P_S(\cV)/{\gm}],w } =\langle\mathcal B_{0}(-r-N -2),\ldots \\ \ldots,\mathcal B_{N -1-r}(-1),  \dbcoh{\P_S(\mathcal P) \times_{\P_S(S^d\mathcal P)} \P_S(\cW)} \rangle.
  \end{eqnarray*}
 \end{itemize}
\end{itemize}

\end{theorem-section}
\begin{proof}
  By directly applying Theorems \ref{thm: HPDMF} and \ref{thm: MainHPDMF} to the elementary wall crossing described above, and simplifying as described in Remark \ref{rem:simplification}, we get the desired result. 
\end{proof}

\begin{remark-section}  For the first part of the theorem, we can alternatively consider $\cX$ as a degree $d$ hypersurface fibration over $\P(S^d\cP^*)$ and use a relative version of Orlov's theorem, proven in \cite{BDFIKdegreed}, with $S=\P_S(S^d\mathcal P^*)$, $\mathcal E=\pi^*\mathcal P$ and $\mathcal U=\O_{\P_S(S^d\mathcal P^*)}(1)$ to get the decomposition 
\begin{align*}
& \dbcoh{\mathcal X}= \\
& \,\,\,\,\, \langle\dcoh{[V_{\P_S(S^d\mathcal P^*)}(\pi^*\mathcal P)/\gm],w}, \mathcal A_1(1)\otimes\dbcoh{\P_S(S^d\mathcal P^*)},\ldots, \mathcal A_i(i)\otimes\dbcoh{\P_S(S^d\mathcal P^*)} \rangle.
\end{align*}
Observing that $V_{\P_S(S^d\mathcal P^*)}(\pi^*\mathcal P) \iso \op{V}_S(\mathcal P)\times_S\P_S(S^d\mathcal P^*)$, we obtain the required semi-orthogonal decomposition. 
\end{remark-section}

\begin{remark-section}
  If $d=2$, using the methods of \cite{BDFIKdegreed}, we recover Kuznetsov's construction for degree two Veronese embeddings \cite{Kuz05} (when $S$ is a point) and the relative version in \cite{ABB}.
\end{remark-section}

\end{document}